\newtheorem{thm}{Theorem}[section]
\newtheorem{cor}[thm]{Corollary}
\newtheorem{lem}[thm]{Lemma}
\newtheorem{prop}[thm]{Proposition}
\newtheorem{conj}[thm]{Conjecture}
\theoremstyle{definition}
\newtheorem{Def}[thm]{Definition}
\numberwithin{equation}{section}
\newcommand{\Z}{\mathbb{Z}}
\newcommand{\R}{\mathbb{R}}
\newcommand{\Q}{\mathbb{Q}}
\newcommand{\C}{\mathbb{C}}
\newcommand{\e}{\varepsilon}
\newcommand{\p}{\mathfrak{p}}
\renewcommand{\O}{\mathcal{O}}
\DeclareMathOperator{\id}{id}
\DeclareMathOperator{\rk}{rank}
\DeclareMathOperator{\Nm}{Nm}
\DeclareMathOperator{\Emb}{Emb}
\DeclareMathOperator{\Gal}{Gal}
\begin{document}


\baselineskip=17pt



\title[A Variation on Leopoldt's Conjecture]{A Variation on Leopoldt's Conjecture: Some Local Units instead of All Local Units}

\author[D.~Nelson]{Dawn Nelson}
\address{Department of Mathematics\\ Bates College\\
Lewiston Maine, USA}
\email{dnelson@bates.edu}

\date{}

\begin{abstract}
{Leopoldt's Conjecture is a statement about the relationship between the global and local units of a number field. Approximately the conjecture states that the $\Z_p$-rank of the diagonal embedding of the global units into the  product of {\em all} local units equals the $\Z$-rank of the global units. The variation  we consider asks: Can we say anything about the $\Z_p$-rank of the diagonal embedding of the global units into the product of {\em some} local units? We use the $p$-adic Schanuel Conjecture to answer the question in the affirmative and moreover we give a value for the $\Z_p$-rank (of the diagonal embedding of the global units into the product of {\em some} local units) in terms of the $\Z$-rank of the global units and a property of the the local units included in the product.}
\end{abstract}

\subjclass[2010]{Primary 11R27; Secondary 11R99}

\keywords{Leopoldt's Conjecture, units}

\maketitle

\section{Introduction}

H.W.~Leopoldt proposed his conjecture relating global and local units in 1962 \cite{Leo}. 
 Since then his  conjecture has been much studied. It has been generalized, strengthened, weakened, and continues to be actively studied because of its connections to other areas of number theory. We continue the tradition of considering variations on Leopoldt's Conjecture.

Leopoldt's Conjecture appears in the study of $p$-adic zeta-functions \cite{Col} \cite{Sfp}, $K$-theory \cite{Kol}, and Iwasawa theory \cite{Gil}  \cite{NSW}. In particular, 
 Leopoldt's Conjecture is related to the splitting of exact sequences of  Iwasawa modules \cite{KW} \cite{Wint}. The conjecture is also equivalent to computing the dimension of a certain Galois cohomology group  \cite{NSW} \cite{NQD}. A variation on the conjecture has been used in Galois deformation theory \cite{CM}.

Throughout this paper we will consider Galois number fields $M$ where $\O_M$ is the ring of integers of $M$ and $\O_M^*$ is the group of {\em global units} in $\O_M$.  
We will also fix a prime number $p$ and use $\p$ to denote prime ideals in $\O_M$ above $p$. Then
 $M_\p$ is the completion of $M$ with respect to the non-archimedean $\p$-adic value,  $\O_\p$ is the ring of integers of $M_\p$, and $\O_\p^*$ is the group of {\em local units}. Let $\pi$ be the uniformizer of $\p$ in $\O_\p$  
and define the {\em principal local units}: $\O^*_{\p,1} := 1+\pi\O_\p$.

\pagebreak
We can state Leopoldt's Conjecture informally:
\begin{conj}[Leopoldt]
 Define the diagonal embedding:
\begin{align}\Delta:\O_M^*&\rightarrow \prod_{\p|p}\O^*_{\p}\notag\\
		u&\mapsto(u,\ldots,u).\notag
\end{align}
 Then $\rk_\Z \O_M^*=\rk_{\Z_p}(\Delta\O_M^*)$.
\end{conj}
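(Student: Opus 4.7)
The plan is to translate the multiplicative statement into a question about $p$-adic logarithms, dispose of the easy inequality by counting, and attack the remaining direction with $p$-adic transcendence theory. By Dirichlet's unit theorem, $r := \rk_\Z \O_M^* = r_1 + r_2 - 1$; fix a system of fundamental units $\epsilon_1, \ldots, \epsilon_r$. Apply the $p$-adic logarithm $\log_p$ componentwise on $\prod_{\p \mid p} \O_\p^*$, extended by zero on the torsion part (roots of unity and Teichm\"uller representatives after replacing a unit by a power landing in $\O^*_{\p,1}$). Since $\log_p$ has finite kernel on the units and is $\Z_p$-linear on the principal units, we obtain
\[
\rk_{\Z_p}(\Delta \O_M^*) \;=\; \dim_{\Q_p} \mathrm{span}_{\Q_p}\bigl\{\,(\log_p \epsilon_i^{(\p)})_{\p \mid p}\ :\ 1 \leq i \leq r\,\bigr\}.
\]
The goal becomes showing this dimension equals $r$.

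First I would handle the upper bound $\rk_{\Z_p}(\Delta \O_M^*) \leq r$: because $\Delta$ is a group homomorphism whose source is finitely generated of $\Z$-rank $r$, the image sits inside a $\Z_p$-module generated by $r$ elements, so its $\Z_p$-rank is at most $r$. This direction is formal and uses nothing beyond Dirichlet.

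The substance is the reverse inequality, equivalently the nonexistence of a nontrivial relation $\sum_{i=1}^r a_i \log_p(\epsilon_i^{(\p)}) = 0$ in every local factor $M_\p$, with $a_i \in \Z_p$ not all zero. My approach would be Galois-theoretic: using that $M/\Q$ is Galois, the various local images of a unit are permuted by $\mathrm{Gal}(M/\Q)$ acting on the primes above $p$, so a relation with $p$-adic coefficients among the log vectors encodes a $\Q_p$-linear relation among $p$-adic logarithms of a finite set of algebraic numbers. In the abelian case one can then reduce to cyclotomic units and invoke Brumer's theorem, the $p$-adic analogue of Baker's theorem on linear forms in logarithms, which prohibits nontrivial $\overline{\Q}$-linear relations among $p$-adic logarithms of multiplicatively independent algebraic numbers. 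A standard Galois-averaging argument then upgrades $\overline{\Q}$-linear independence to the $\Z_p$-linear independence needed.

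The main obstacle, and the reason this conjecture is still open in general, is that Brumer's theorem only controls relations with algebraic coefficients, whereas the $\Z_p$-coefficients in $\Delta \O_M^*$ need not be algebraic. To forbid transcendental linear relations among the $\log_p \epsilon_i^{(\p)}$ in full generality, one would need a $p$-adic analogue of the Schanuel conjecture, or some equivalent strengthening of the Schneider-Lang machinery in the $p$-adic setting; no such result is presently known. This is precisely why the paper proposes to \emph{assume} the $p$-adic Schanuel conjecture and study a weaker variant, rather than attack Leopoldt's statement head-on.
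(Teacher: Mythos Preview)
Your proposal is really a diagnosis rather than a proof, and as such it is accurate: this statement is Leopoldt's Conjecture itself, which the paper does not prove unconditionally. The paper only establishes the conditional implication (Theorem~\ref{SchL}) that the $p$-adic Schanuel Conjecture implies Leopoldt for finite Galois $M/\Q$, and you correctly identify Schanuel as the missing ingredient and the reason the general case remains open.

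Comparing your outline to the paper's conditional argument: both obtain the easy inequality $\rk_{\Z_p}\le r$ formally from Dirichlet together with $\Z\subset\Z_p$. For the hard direction you propose working with a system of fundamental units $\epsilon_1,\dots,\epsilon_r$ and ruling out $\Z_p$-relations among their log-vectors, invoking Brumer in the abelian case. The paper instead fixes a single weak Minkowski unit $\e$ and forms the full $n\times n$ matrix $(\log_p(\sigma_i g_j\e))_{\sigma_i\in E_p,\,g_j\in G}$. Schanuel is then applied not to forbid linear relations directly but to assert that a $\Q$-basis of the span of the matrix entries is \emph{algebraically} independent; this forces every vanishing minor-determinant of the $p$-adic matrix to vanish identically as a polynomial (Lemma~\ref{poly}), hence, after transporting along an isomorphism $\C_p\cong\C$, to vanish on the archimedean regulator matrix $(\log|\tau_i g_j\e|)$ as well, whose rank is known to equal $r$ by Corollary~\ref{r}. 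This is Waldschmidt's method, and it treats the general Galois case uniformly once Schanuel is granted, bypassing the abelian/non-abelian dichotomy in your sketch.

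One minor sharpening of your last paragraph: the obstruction you name, that $\Z_p$-coefficients may be transcendental, is essentially right, but the precise failure outside the abelian case is not in Brumer's theorem itself (which does upgrade $\Q$-independence of $p$-adic logarithms of algebraic numbers to $\overline{\Q}$-independence) but in the Galois-averaging step that, in the abelian case, converts a hypothetical $\Q_p$-relation into a nontrivial $\overline{\Q}$-relation to which Brumer can be applied.
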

\noindent See Section \ref{2ways} for the precise formulation which involves principal local units and a topological closure.

Leopoldt's Conjecture has been proven in some special cases. For example, using a method outlined by J.~Ax \cite{Ax} and A.~Brumer's \cite{Br} $p$-adic version of a theorem of A.~Baker \cite{Ba}, one can prove that the conjecture is true for abelian extensions of $\Q$, for CM fields with abelian maximal real subfields, and for abelian extensions of imaginary quadratic fields. A paper by M.~Laurent proves that the conjecture is true for Galois extensions that satisfy certain conditions on absolutely irreducible characters of the Galois group \cite{Lau}.
 In 2009, P.~Mih\u{a}ilescu announced a proof for all number fields \cite{Mih}.

The variation we consider concerns the map:
\[\Delta_\Gamma:\O_M^*\rightarrow\prod_{\p\in \Gamma}\O^*_{\p},\]
where $\Gamma$ is a subset of the primes above the fixed $p$, $\Gamma\subset\{\p\subset \O_M\,\big|\,\p|p\}$.
We explore the question: Can we say anything about $\rk_{\Z_p}(\Delta_\Gamma\O_M^*)$? The answer: Yes!

Our strongest result can be stated informally:
\begin{thm}
Assume both Schanuel's Conjecture and the $p$-adic version. Let $t$ be a constant determined by which $\p$ are in $\Gamma$.\footnote{See Section \ref{calc} for details on $t$.} Let $M$ be real or CM over $\Q$. Then $\rk_{\Z_p}(\Delta_\Gamma\O_M^*) = \rk_\Z \O_M^*- t +1.$
\end{thm}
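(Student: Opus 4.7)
The plan is to convert the question into a transcendence statement about $p$-adic logarithms and then invoke the $p$-adic Schanuel Conjecture. First, I would pass to a finite-index subgroup of $\O_M^*$ whose image under $\Delta_\Gamma$ lies in $\prod_{\p\in\Gamma}\O^*_{\p,1}$, replace that image by its topological closure (both operations preserve the $\Z_p$-rank), and apply the $p$-adic logarithm coordinatewise. This produces a $\Z_p$-linear map into $\bigoplus_{\p\in\Gamma}M_\p$, and the $\Z_p$-rank we want equals the $\Q_p$-dimension of the $\Q_p$-span of its image.

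Next, fix $\Z$-generators $u_1,\dots,u_r$ of $\O_M^*$ modulo torsion, where $r=\rk_\Z\O_M^*$, and let $S_\Gamma$ denote the set of embeddings $M\hookrightarrow\bar\Q_p$ that induce some prime in $\Gamma$. After base change to $\bar\Q_p$, the rank of interest equals the rank of the matrix $L_\Gamma=\bigl(\log_p\sigma(u_i)\bigr)_{\sigma\in S_\Gamma,\,1\le i\le r}$. The $p$-adic Schanuel Conjecture, applied to the algebraic numbers $\{\sigma(u_i)\}$, asserts that any $\bar\Q$-linear relation among the $\log_p\sigma(u_i)$ is forced by a $\Q$-linear relation, which in turn must come from a multiplicative relation among the $\sigma(u_i)$. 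Hence computing $\rk L_\Gamma$ reduces to enumerating the independent multiplicative relations that survive when one restricts to embeddings in $S_\Gamma$.

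The hypothesis that $M$ is real or CM enters precisely here, because it pins down exactly which relations appear. In the real case, the only forced relations come from the norm identities $\Nm(u_i)=\pm1$, giving partial sums $\sum_{\sigma\in\Emb_\p}\log_p\sigma(u_i)$ for each $\p$, and the restriction to $\Gamma$ controls which of these partial sums survive. In the CM case, the additional symmetry $u\bar u\in\mu_M$ for units from the totally real subfield yields relations of the form $\log_p\sigma(u)+\log_p\bar\sigma(u)=0$ on the relevant embeddings, indexed by complex-conjugate pairs inside $S_\Gamma$. A careful count then packages the number of independent surviving relations as $t-1$, so that $\rk L_\Gamma=r-(t-1)$, matching the claim.

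The main obstacle is extracting a matching upper bound on $\rk L_\Gamma$, since the $p$-adic Schanuel Conjecture by itself only provides a lower bound on transcendence degree, hence a lower bound on $\Q_p$-linear independence. The needed upper bound requires an exhaustive inventory of all multiplicative relations among $\{\sigma(u_i):\sigma\in S_\Gamma\}$, verifying that only the Galois-theoretic, norm, and (in the CM case) complex-conjugation relations contribute. The delicate bookkeeping between embeddings in $S_\Gamma$, Galois orbits of primes in $\Gamma$, torsion contributions, and the passage to principal units is where I expect the real work of Section \ref{calc} to concentrate, rather than in the transcendence input itself.
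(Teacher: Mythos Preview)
Your outline is plausible but takes a genuinely different route from the paper, and a few details are off.

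\textbf{What the paper does.} The paper never enumerates multiplicative relations among the $\sigma(u_i)$ directly. Instead it fixes a weak Minkowski unit $\e$ and compares the $p$-adic matrix $(\log_p(\sigma_i g_j\e))_{\sigma_i\in S_p,\,g_j\in G}$ to the \emph{archimedean} matrix $(\log|\tau_i g_j\e|)_{\tau_i\in S_\psi,\,g_j\in G}$ obtained by transporting $S_p$ to $E$ via an isomorphism $\psi:\C_p\to\C$. The lower bound uses $p$-adic Schanuel: a $\Q$-basis of the entries is algebraically independent, so every vanishing minor of the $p$-adic matrix is forced by the $\Q$-linear relations among the $\log_p(\sigma_i g_j\e)$; those relations can be exponentiated, pushed through $\psi$, and turned into the same $\Q$-linear relations among the $\log|\tau_i g_j\e|$, whence $\rk(\text{$p$-adic})\ge\rk(\text{real})$. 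The real-matrix rank is then computed explicitly (row-counting in the totally real case; pairing off complex-conjugate rows in the complex case) to give $r-(t-1)^+$. For the upper bound in the CM case the paper runs the same argument in reverse, using \emph{real} Schanuel together with the CM-specific lemma that $|\tau_i m_1|=|\tau_j m_2|$ forces $\tau_j m_2=\zeta\,\tau_i m_1$ for a root of unity $\zeta$; in the totally real case the upper bound is simply the number of surviving rows.

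\textbf{How your approach differs, and what to watch.} Your plan avoids the archimedean matrix entirely and tries to read the rank off from multiplicative relations among $\{\sigma(u_i):\sigma\in S_\Gamma\}$. That is viable in principle---it amounts to knowing the $\Q[G]$-module structure of $\O_M^*\otimes\Q$, which in the real and CM cases is $\Q[G]/(N_G)$ and $\Q[G]/(N_G,\,1+c)$ respectively---and if carried out would need only the $p$-adic Schanuel Conjecture, not the real one. Two cautions: (i) your description of the real-case relations as ``partial sums $\sum_{\sigma\in\Emb_\p}\log_p\sigma(u_i)$ for each $\p$'' is not right; the only global relation is the full norm $\sum_{\sigma\in E_p}\log_p\sigma(u_i)=0$, and the upper bound there is just the row count $|S_\Gamma|=n-t$. (ii) You frame the hard direction as the \emph{upper} bound, but in your setup known relations give the upper bound for free; the content is in showing there are no \emph{further} relations, which is the lower bound---and that is exactly where the paper invokes Schanuel and the archimedean comparison rather than a direct enumeration.
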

\noindent In the case of complex, non-CM, extensions we have the weaker result:
\begin{thm}
Assume the $p$-adic Schanuel Conjecture. Let $t$ be a constant determined by which $\p$ are in $\Gamma$. Let $M$ be a complex, non-CM, extension of  $\Q$. Then $\rk_{\Z_p}(\Delta_\Gamma\O_M^*) \geq \rk_\Z \O_M^*- t +1.$
\end{thm}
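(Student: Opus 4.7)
The plan is to convert the $\Z_p$-rank problem into a transcendence-theoretic dimension count via the $p$-adic logarithm, and then to invoke $p$-adic Schanuel to produce the lower bound. Fix a $\Z$-basis $u_1,\ldots,u_r$ of the free part of $\O_M^*$, where $r=\rk_\Z\O_M^*$. Because $\log_p$ is injective on $\O_{\p,1}^*$ and the quotient $\O_\p^*/\O_{\p,1}^*$ is torsion, $\rk_{\Z_p}(\Delta_\Gamma\O_M^*)$ equals the $\Q_p$-dimension of the span of the vectors $v_i=(\log_p u_i)_{\p\in\Gamma}\in\prod_{\p\in\Gamma}M_\p$. Choosing, for each $\p\in\Gamma$, a compatible system of embeddings of $M_\p$ into $\bar{\Q}_p$, the problem reduces to computing the $\bar{\Q}_p$-rank of the matrix whose $(i,\sigma)$-entry is $\log_p\sigma(u_i)$, with $\sigma$ running through embeddings of $M$ into $\bar{\Q}_p$ lying above primes in $\Gamma$.

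The second step is to apply the $p$-adic Schanuel conjecture, which asserts (in the form needed here) that a $\Q$-linearly independent collection of $\log_p$-values of algebraic numbers is algebraically independent over $\bar{\Q}$, and hence $\bar{\Q}_p$-linearly independent. Thus the rank of the matrix is bounded below by the $\Q$-dimension of the $\Q$-span of the entries $\log_p\sigma(u_i)$. The forced linear relations among these logarithms arise from multiplicative identities among the conjugates $\sigma(u_i)$, the most basic being the global norm relation $\sum_{\sigma}\log_p\sigma(u_i)=\log_p\Nm_{M/\Q}(u_i)=0$; the constant $t$ of Section~\ref{calc} is designed precisely to encode the dimension of the space of such relations surviving after restriction to $\Gamma$. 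Subtracting the count of forced relations from the available data then yields the desired lower bound $r-t+1$.

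The main obstacle, and the reason we obtain only an inequality in the non-CM complex setting, is the absence of the extra involution provided by complex conjugation. In the real or CM case of the previous theorem, a complex conjugation compatible with both the archimedean and the $p$-adic places supplies additional relations that, together with the full (complex) Schanuel conjecture, pin the rank down to an equality. Without such an involution, $p$-adic Schanuel alone can rule out \emph{unexpected} dependencies but cannot furnish a matching upper bound, so we only recover an inequality. The most delicate part of the argument is therefore the combinatorial bookkeeping verifying that $t$ correctly enumerates the relations forced by $\Gamma$, independently of whether $M$ is CM, together with the translation from $\Q$-linear independence of the logarithms (given by $p$-adic Schanuel) to the $\bar{\Q}_p$-linear independence required to read off the matrix rank.
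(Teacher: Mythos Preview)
Your proposal has a genuine gap at its central step. The claim that ``the rank of the matrix is bounded below by the $\Q$-dimension of the $\Q$-span of the entries $\log_p\sigma(u_i)$'' is neither justified nor true in general: a $2\times 2$ matrix such as $\begin{pmatrix}\pi & e\\ 2\pi & 2e\end{pmatrix}$ has rank $1$ while the $\Q$-span of its entries is $2$-dimensional. What $p$-adic Schanuel actually gives you is that a $\Q$-basis of the span of the entries is \emph{algebraically} independent; this tells you that the rank of your matrix equals the \emph{generic} rank of the matrix obtained by replacing those basis elements by indeterminates. But computing that generic rank requires knowing the precise $\Q$-linear relations among the entries, not merely their count, and you give no mechanism for doing so purely on the $p$-adic side.

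The paper supplies exactly this missing mechanism by passing to the archimedean side. One fixes an isomorphism $\psi:\C_p\to\C$ and observes that every $\Q$-linear relation among the $\log_p(\sigma_i g_j\e)$ comes (via $\exp_p$) from a multiplicative relation among the algebraic numbers $\sigma_i g_j\e$, which $\psi$ transports to the same multiplicative relation among the $\tau_i g_j\e$ in $\C$, hence to the same $\Q$-linear relation among the $\log|\tau_i g_j\e|$. Thus the generic rank in question is bounded below by the rank of the \emph{archimedean} matrix $(\log|\tau_i g_j\e|)_{\tau_i\in S_\psi,\,g_j\in G}$, and this rank one can compute explicitly from Dirichlet's unit theorem: it equals $r-(v_\psi-1)^+$, where $v_\psi$ counts pairs of complex-conjugate embeddings lying in $E\smallsetminus S_\psi$. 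Maximising over $\psi$ and translating via the bijection $\psi\leftrightarrow c$ (Lemma~\ref{cc=c}, Proposition~\ref{horse}) gives $t=\min_{c\in C}\{t_c\}$, which counts right cosets of $\{\id,c\}$ in the set of \emph{removed} Galois elements, not ``forced relations surviving after restriction to $\Gamma$'' as you describe. Your sketch never introduces the archimedean comparison, and without it there is no route from Schanuel to the stated bound.
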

\noindent See Section \ref{calc} for the formal statements of these theorems which involve principal local units and topological closures.

As we know from linear algebra the rank of an image can be expressed as the rank of a matrix. Thus in this paper we consider an appropriate matrix, one whose entries are $p$-adic logarithms, and we use transcendence theory (in particular Schanuel's Conjecture) to calculate its rank.

\section{Notation}\label{nota}

 For $q\in\Q_p$ let $|q|_p$ be the usual $p$-adic absolute value. It has a unique extension to the algebraic closure of $\Q_p$ and to $\C_p$, the completion of the algebraic closure  of $\Q_p$. By abuse of notation, we also let $|\cdot|_p$  be the absolute value on that extension.

 For $\{x\in \C_p:|x-1|_p<1\}$, define the $p$-adic logarithm $\log_p(x)$ by the usual power series 
\[\log_p(X)=\sum_{n=1}^{\infty}\frac{(-1)^{n+1}(X-1)^n}{n}.\]
We  can uniquely extend $\log_p$  to all of $\C_p^*$ so that $\log_p(ab)=\log_p(a)+\log_p(b)$ and $\log_p(\zeta p^s)=0$ for all roots of unity $\zeta$ and $s\in \Z$.   
 For $\{x\in \C_p:|x|_p<p^{-1/(p-1)}\}$, define the $p$-adic exponential $\exp_p(x)$ by the usual power series 
\[\exp_p(X)=\sum_{n=1}^{\infty}\frac{(X)^n}{n!}.\]
The $p$-adic exponential does not extend uniquely to all of $\C_p$.

 Note that $\log_p$ is injective on $\{x\in \C_p:|x-1|_p<1\}$. Moreover, the function $\log_p$ gives an isomorphism between the multiplicative group $\{x\in \C_p:|x-1|_p<p^{-1/(p-1)}\}$ and the additive group $\{x\in \C_p:|x|_p<p^{-1/(p-1)}\}$. The function $\exp_p$ is the inverse of $\log_p$ on these groups.

Let $G=\Gal(M/\Q)$ and $|G|=n$.  Define $E_p:=\Emb(M,\C_p)$ to be the set of all embeddings of $M$ into $\C_p$. Similarly define $E:=\Emb(M,\C)$ to be the set of all embeddings of $M$ into $\C$. For $\tau\in E$, we can define $\overline{\tau}\in E$ so that $\overline{\tau}(m) = \overline{\tau(m)}$ where $\overline{\tau(m)}$ is the complex conjugate of ${\tau}(m)$.

The relationships between $G$, $E$, and $E_p$ will be important in what follows. Although $G$ is a group and $E$ and $E_p$ are only sets, all three have the same cardinality. For any fixed $\tau\in E$ we have $E=\{\tau\circ g\,|\,g\in G\}$. Similarly, for any fixed $\sigma\in E_p$  we have $E_p=\{\sigma\circ g\,|\,g\in G\}$. Moreover, the following lemma  defines natural  bijections between $E$ and $E_p$.

\begin{lem}\label{bij} Let $\psi:\C_p\rightarrow\C$ be an isomorphism.\footnote{The proof that such an isomorphism exists requires Zorn's Lemma (equivalently, the Axiom of Choice). In fact, there are infinitely many isomorphisms between $\C_p$ and $\C$.} Then there is a bijection from $E_p$ to $E$ given by 
\[(\psi.\sigma)(m):=\psi(\sigma(m))\]
where $\sigma\in E_p$ and $m\in M$. Similarly, given an isomorphism from $\C$ to $\C_p$ there is a bijection from $E$ to $E_p$.
\end{lem}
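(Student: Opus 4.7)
The plan is to verify that the assignment $\sigma \mapsto \psi.\sigma = \psi \circ \sigma$ is a well-defined map $E_p \to E$, and then to exhibit an explicit two-sided inverse using the fact that $\psi$ is an isomorphism. Since the lemma is essentially a bookkeeping statement about composing field embeddings with a field isomorphism, no deep machinery is needed; the only nontrivial input is the existence of such a $\psi$, which the footnote already provides via Zorn's Lemma.

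First I would check well-definedness: given $\sigma \in E_p$, the composition $\psi \circ \sigma$ is a ring homomorphism $M \to \C$ because both $\sigma : M \to \C_p$ and $\psi : \C_p \to \C$ are ring homomorphisms. Injectivity of $\psi \circ \sigma$ follows because $\sigma$ is injective (as a homomorphism from a field) and $\psi$ is an isomorphism, so $\psi \circ \sigma \in \Emb(M, \C) = E$. Thus $\sigma \mapsto \psi \circ \sigma$ is a well-defined map $E_p \to E$.

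Next I would define the candidate inverse $\tau \mapsto \psi^{-1} \circ \tau$ from $E$ to $E_p$. Since $\psi^{-1} : \C \to \C_p$ is also a field isomorphism, the same argument as above shows $\psi^{-1} \circ \tau \in E_p$ whenever $\tau \in E$. It then remains to observe that the two assignments are mutually inverse: for any $\sigma \in E_p$ we have $\psi^{-1} \circ (\psi \circ \sigma) = \sigma$, and for any $\tau \in E$ we have $\psi \circ (\psi^{-1} \circ \tau) = \tau$, because $\psi \circ \psi^{-1}$ and $\psi^{-1} \circ \psi$ are both the identity on the respective targets. This establishes the desired bijection $E_p \to E$.

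For the final clause, I would simply note that the argument is completely symmetric in $\C$ and $\C_p$: replacing $\psi$ throughout by an isomorphism $\C \to \C_p$ and swapping the roles of the two fields yields a bijection $E \to E_p$ by the identical reasoning. Since there is no genuine obstacle, the "hardest" step is really just being careful that the map $\psi$ need only be a (not necessarily continuous) field isomorphism for the compositions to remain field embeddings; continuity plays no role, which is fortunate because no topological isomorphism between $\C_p$ and $\C$ exists.
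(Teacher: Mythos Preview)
Your proof is correct and complete. The paper itself states this lemma without proof, treating it as an elementary observation; your argument via the explicit inverse $\tau \mapsto \psi^{-1}\circ\tau$ is exactly the natural verification one would supply.
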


Throughout this paper we will rely on the existence of a special global unit. Because of the similarity to Minkowski units as described in \cite{Nar}, we  call this unit a {\em weak Minkowski unit}. In fact, in the case of real extensions, our definition and the usual definition agree.

\begin{Def} An element $\e$ in $\O_M^*$ is a {\em weak Minkowski unit} if the $\Z$-module generated by $g\e$, for all $g \in \Gal(M/\Q)$, is of finite index in $\O_M^*$.
\end{Def}

{Under the usual definition, there are extensions for which a Minkowski unit does not exist and there are extensions for which it is not yet known whether or not a Minkowski unit exists \cite{Nar}. On the other hand the case for weak Minkowski units is settled.}

\begin{prop}\label{unit} For all finite Galois extensions $M$ of $\Q$ there exists a weak Minkowski unit. Moreover, we can find a weak Minkowski unit $\e$ such that   $|\sigma_i g_j \e-1|_p<1$ for all $g_j\in G$ and $\sigma_i\in E_p$.
\end{prop}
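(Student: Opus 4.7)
My plan is to handle the two assertions in sequence: first produce some weak Minkowski unit $\e_0$, and then modify it by raising to an integer power to arrange the $p$-adic proximity condition.

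For the existence of $\e_0$, I would work in $V := \O_M^* \otimes_\Z \Q$ as a $\Q[G]$-module. Because $M/\Q$ is Galois, all infinite places share a common decomposition group $G_\infty$, and the Dirichlet logarithmic embedding together with a $\Q$-character comparison identifies $V$ with $\ker\bigl(\mathrm{Ind}_{G_\infty}^G \mathbf{1} \to \mathbf{1}\bigr)$ as a $\Q[G]$-module. By Frobenius reciprocity, each simple $\Q[G]$-module occurs in $V$ with multiplicity no greater than its multiplicity in the regular representation $\Q[G]$; since $\Q[G]$ is semisimple (Maschke), $V$ is then isomorphic to a direct summand of $\Q[G]$ and is therefore cyclic. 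Choosing a cyclic generator, writing it in terms of a fundamental system of units $\eta_1,\dots,\eta_r$, and clearing denominators yields $\e_0 = \prod_i \eta_i^{m_i}\in\O_M^*$ whose $G$-orbit spans $V$ over $\Q$. Hence $\langle g\e_0 : g\in G\rangle_\Z$ has full rank $r_1+r_2-1$ and finite index in $\O_M^*$, so $\e_0$ is a weak Minkowski unit.

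For the $p$-adic modification I would observe that, since $E_p = \{\sigma_i\circ g : g\in G\}$ for any fixed $\sigma_i$, the hypothesis collapses to $|\tau\e-1|_p<1$ for every $\tau\in E_p$, which unpacks to $\e\equiv 1\pmod{\p}$ for every prime $\p\mid p$ of $\O_M$. The set
\[
U := \ker\Bigl(\O_M^* \longrightarrow \prod_{\p\mid p} (\O_\p/\p\O_\p)^*\Bigr)
\]
is a subgroup of finite index $N := [\O_M^*:U]$ because the target is finite. Setting $\e := \e_0^N$ then forces $\e\in U$, so the $p$-adic estimates hold. To check that $\e$ remains a weak Minkowski unit, let $H := \langle g\e_0 : g\in G\rangle_\Z$; this has finite index in $\O_M^*$ by the previous paragraph, and since $H$ is abelian, $\langle g\e : g\in G\rangle_\Z = \langle (g\e_0)^N : g\in G\rangle_\Z = H^N$. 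The finitely generated abelian group $H$ has $[H:H^N]$ finite, so $H^N$ also has finite index in $\O_M^*$.

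The most delicate ingredient is the cyclicity of $V$ as a $\Q[G]$-module: one must pass from the real statement supplied by the logarithmic embedding to a genuine $\Q[G]$-isomorphism. The character comparison above resolves this uniformly across totally real and totally complex fields and avoids a case split into real, CM, and complex non-CM cases. Once cyclicity is in hand, the remainder is a routine finite-index calculation.
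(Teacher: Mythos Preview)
Your argument is correct and takes a genuinely different route from the paper. The paper constructs $\e$ by a geometry-of-numbers argument modeled on the proof of Dirichlet's Unit Theorem: it chooses a vector $\vec{x}$ in Minkowski space with one small and many large coordinates, applies Minkowski's lattice-point theorem to locate a unit $\e$ with $|\tau_k\e|<1$ for all but one archimedean embedding, and then shows directly that the $r\times r$ matrix $(\log|\tau_i g_j\e|)$ is invertible by a sign argument on its entries. Your approach instead identifies $\O_M^*\otimes_\Z\Q$ as a cyclic $\Q[G]$-module via the Herbrand--Artin description $V\cong\ker(\mathrm{Ind}_{G_\infty}^G\mathbf{1}\to\mathbf{1})$, Frobenius reciprocity, and Noether--Deuring descent from $\R$ to $\Q$; this is shorter, more conceptual, and makes the Galois-module structure transparent. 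The paper's method is more elementary (no representation theory) and yields as a byproduct the explicit invertible $r\times r$ submatrix used immediately afterward in Corollary~\ref{r}; your method also gives that corollary, since any weak Minkowski unit has $r$ multiplicatively independent conjugates, but less explicitly. For the $p$-adic condition the two arguments are parallel: the paper raises to the power $p^f-1$ via the Fermat-type Lemma~\ref{FLT}, while you raise to the index $N=[\O_M^*:U]$; both exploit that the target is finite. One small remark: your equality $\langle(g\e_0)^N\rangle_\Z=H^N$ is indeed correct for abelian $H$ (in additive notation it reads $\Z\text{-span}\{Nv_g\}=NH$), though it would be enough simply to note that the $N$-th powers of a spanning set still span $V$ over $\Q$.
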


\begin{proof}
This proof uses many of the same steps as  one proof  of Dirichlet's Unit Theorem (see \cite[Theorem 5.9]{ANT}). 

Let $r=\rk_\Z\O_M^*$ and $|G|=n$. Dirichlet's Unit Theorem relates $r$ and $n$:
\[r=\begin{cases}
     n-1 & \text{if }M\text{ is real}, \\
     \frac{n}{2}-1 & \text{if }M\text{ is complex}.
\end{cases}
\]
If $M/\Q$ is totally real take $E=\Emb(M,\R)= \{\tau_1,\ldots,\tau_{r+1}\}$. If $M/\Q$ is complex  take  $E=\Emb(M,\C)= \{\tau_1,\ldots,\tau_{r+1},\overline{{\tau}_1},\ldots,\overline{{\tau}_{r+1}}\}$. Number the  $g\in G$ so that $\tau_1=\tau_ig_i$ and $\tau_1=\overline{{\tau}_i}{g}_{r+1+i}$. 
Consider the map $\hat\tau$:
\[\begin{array}{|rcl|rcl|}\hline
\multicolumn{3}{|l|}{\mbox{if }M/\Q\mbox{ is real}}&\multicolumn{3}{|l|}{\mbox{if }M/\Q\mbox{ is complex}}\\\hline
\hat\tau:M&\rightarrow &\R^{r+1}=:X&\hat\tau:M&\rightarrow &\C^{r+1}=:X\\
\alpha&\mapsto&(\tau_1\alpha,\ldots,\tau_{r+1}\alpha)&\alpha&\mapsto&(\tau_1\alpha,\ldots,\tau_{r+1}\alpha)\\\hline
\end{array}\]
 
 Let $\vec{x}=(x_1,\ldots, x_{r+1})$ be in $X$. Define  
\[|\Nm(\vec{x})|:=\begin{cases}
   \prod|x_i|   & \text{if }M\text{ is real}, \\
     \prod|x_i|^2 & \text{if }M\text{ is complex}.
\end{cases} 
\]
This definition agrees with the usual definition of the norm of an element in a number field, i.e., for $\alpha\in\O_M$, $|\Nm(\hat\tau(\alpha))|=|\Nm_{M/\Q}(\alpha)|$. In what follows, we always take $\vec{x}$ to be in the set $X':=\left\{\vec{x}\in X \,\Big|\, \frac{1}{2}\leq|\Nm(\vec{x})|\leq1\right\}$. For $\vec{x},\vec{y}\in X$, define $\vec{x}\cdot\vec{y}$ to be componentwise multiplication and $\vec{x}\cdot\hat\tau(\O_M):=\{\vec{x}\cdot\hat\tau(\alpha)\,|\,\alpha\in\O_M\}$. 
 
In $X$, $\hat\tau(\O_M)$ is a full lattice with  fundamental domain having finite calculable volume  $V$ \cite[Proposition 4.26]{ANT}. Moreover,  $\vec{x}\cdot\hat\tau(\O_M)$ is a full lattice in $X$ with fundamental domain having volume $V'(\vec{x})=V|\Nm(\vec{x})|$ and $V'(\vec{x})\leq V$. Take a subset, $T\subset X$, that is compact, convex, and symmetric with respect to the origin and such that vol$(T)\geq2^mV'(\vec{x})$ for all $\vec{x}\in X'$ ($m=\dim_\R X$). 

By Minkowski's Theorem,\footnote{{\bf Minkowski's Theorem.} \em Let $D$ be the fundamental domain of a lattice.  Let $T$ be a subset of a real vector space of dimension m that is compact, convex, and symmetric in the origin. If vol$(T)\geq2^m\mbox{vol}(D)$ then T contains a point of the lattice other than the origin.} for all $\vec{x}\in X'$ there exists a nonzero $\beta\in\O_M$  such that $\vec{x}\cdot\hat\tau(\beta)\in T$.  Points in $T$ have bounded coordinates and thus bounded norm,  so   for some fixed $N\in\R$
 \[|\Nm(\vec{x}\cdot\hat\tau(\beta))|\leq N, \]
and
\[|\Nm_{M/\Q}(\beta)|= |\Nm(\hat\tau(\beta))|\leq N/|\Nm(\vec{x})|\leq 2N.
 \]
Now consider all ideals $\beta\O_M$ where $\beta$ is such that there is some $\vec{x}\in X'$ with $\vec{x}\cdot\hat\tau(\beta)\in T$. The norm of $\beta$ is bounded, thus there are only finitely many such ideals, call them $\{\beta_1\O_M,\ldots,\beta_t\O_M\}$. So for any $\beta$ with $\vec{x}\cdot\hat\tau(\beta)\in T$ we have $\beta\O_M=\beta_j\O_M$ for some $j$. Hence there exists a unit $\epsilon$ such that  $\beta=\beta_j\epsilon$ and $\vec{x}\cdot\hat\tau(\epsilon)\in \hat\tau(\beta_j^{-1})\cdot T$. 

Define $T':=\hat\tau(\beta_1^{-1})T\cup\ldots\cup\hat\tau(\beta_t^{-1})T$, it  is bounded and independent of any $\vec{x}$. The previous paragraph shows that for each $\vec{x}\in X'$ there exists a unit $\epsilon$ such that $\vec{x}\cdot\hat\tau(\epsilon)\in T'$ and hence $\vec{x}\cdot\hat\tau(\epsilon)$ has bounded coordinates independent of $\vec{x}$.

We are now ready to construct our weak Minkowski unit. Choose $\vec{x}$ so that for $k\neq 1$ the coordinates $x_k$ are very large compared to those of $T'$  and $x_1$ is very small so that $|\Nm(\vec{x})|=1$. This $\vec{x}$ is in $X'$, so there exists a unit $\e$ (which will be our weak Minkowski unit) such that $\vec{x}\cdot\hat\tau(\e)\in T'$ and hence has bounded coordinates, i.e., $|x_k\tau_k(\e)|\leq L$ for some $L\in \R$. For $k\neq1$, we chose  $x_k$  large enough so that  $|\tau_k(\e)|<1$. Hence $|\tau_ig_j(\e)|<1$ if $\tau_ig_j\neq\tau_1$, i.e., if  $i\neq j$. So $\log|\tau_ig_j(\e)|<0$ for $i\neq j$.

 For any fixed  $j$, $g_j\e\in\O_M^*$ hence   $\displaystyle\sum_{i=1}^{r+1}\log|\tau_ig_j(\e)|=0$. So for $j\neq r+1$
\[\displaystyle\sum_{i=1}^{r}\log|\tau_ig_j(\e)|=-\log|\tau_{r+1}g_j(\e)|>0.\]
Combining this with the fact that $\log|\tau_ig_j(\e)|<0$ for $i\neq j$, linear algebra tells us that   the $r\times r$ matrix $(\log|\tau_ig_j(\e)|)_{i,j=1,\ldots, r}$ is invertible. 

	The invertibility of the matrix implies that $\{g_1\e,\ldots,g_{r}\e\}$ are multiplicatively independent. 
Since the $\Z$-rank of $\O_M^*$ is $r$, the $r$ multiplicatively independent elements $\{g_1\e,\ldots,g_{r}\e\}$ generate a $\Z$-module of finite index in $\O_M^*$. Moreover, the $\Z$-module generated by $g\e$, for all $g\in G$, also has finite index in $\O_M^*$. Thus $\e$ is a weak Minkowski unit.

By Lemma \ref{FLT} below, there exists an $s$ such that such that   $|\sigma_i g_j \e^s-1|_p<1$ for all $g_j\in G$ and $\sigma_i\in E_p$. If $\e$ is a weak Minkowski unit then so is $\e^{s}$. Thus, if the first weak Minkowski unit we find does not satisfy the desired condition, we can replace it with one that does.
\end{proof}

\begin{lem}\label{FLT}
If $u\in\O^*_{M}$ then for all $\sigma\in E_p$, $\left|\sigma \left(u^{p^f-1}\right)-1\right|_p<1$, where $f=[\O_M/\p\O_M:\Z/p\Z]$ and $\p$ is a prime ideal above $p$.
\end{lem}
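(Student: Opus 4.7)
The plan is to reduce the claim to Fermat's little theorem applied to the residue field at a prime above $p$.

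First, I will use the Galois hypothesis on $M/\Q$ to observe that the residue degree $f = [\O_M/\p : \Z/p\Z]$ does not depend on the choice of prime $\p$ lying above $p$, so the exponent $p^f - 1$ in the statement is well-defined.

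Next, I will attach each embedding $\sigma \in E_p$ to a specific prime $\p_\sigma \mid p$. Concretely, the closure of $\sigma(M) \subset \C_p$ with respect to $|\cdot|_p$ is a finite extension of $\Q_p$ canonically isomorphic to the completion $M_{\p_\sigma}$ for a unique $\p_\sigma \mid p$, and this identification carries $\sigma(\O_M)$ into the valuation ring $\O_{\p_\sigma}$. Because $u$ is a global unit, both $u$ and $u^{-1}$ lie in $\O_M$, so both $\sigma(u)$ and $\sigma(u)^{-1}$ have $p$-adic absolute value at most $1$; hence $|\sigma(u)|_p = 1$, i.e.\ $\sigma(u) \in \O_{\p_\sigma}^*$.

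Finally, the residue field $\O_{\p_\sigma}/\p_\sigma$ has exactly $p^f$ elements, so its multiplicative group has order $p^f - 1$. Reducing modulo the maximal ideal gives $\sigma(u)^{p^f-1} \equiv 1$, which is precisely the inequality $\left|\sigma(u^{p^f-1}) - 1\right|_p < 1$.

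There is no genuinely hard step here; the work is purely in the setup. The one point that requires a little care is verifying that $|\cdot|_p$ on $\C_p$ restricts to the canonical $\p_\sigma$-adic valuation on the image of $M_{\p_\sigma}$, so that reducing modulo the maximal ideal on both sides coincides. This is guaranteed by the uniqueness of the extension of $|\cdot|_p$ to any algebraic extension of $\Q_p$, which was already invoked in Section~\ref{nota}.
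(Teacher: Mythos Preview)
Your proof is correct and is precisely the argument the paper has in mind: the paper does not spell out a proof but simply remarks that ``Lemma~\ref{FLT} is a relative of Fermat's Little Theorem,'' and your reduction to the multiplicative group of the residue field $\O_M/\p_\sigma \cong \mathbb{F}_{p^f}$ is exactly that. The only cosmetic point is notational---by ``$\O_{\p_\sigma}/\p_\sigma$'' you mean the residue field of the local ring $\O_{\p_\sigma}$, which is canonically $\O_M/\p_\sigma$---but the reasoning is sound.
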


Lemma \ref{FLT} is a relative of Fermat's Little Theorem.

\begin{cor}\label{r}
Let $M$ be a finite Galois extension of $\Q$. Let $\e$ be a weak Minkowski unit. Then
$\rk\left(\log|\tau_ig_j(\e)|\right)_{\tau_i\in E,\,g_j\in G}=\rk_\Z O_M^*.$
\end{cor}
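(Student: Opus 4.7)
The plan is to recognize that the columns of this matrix are precisely the logarithmic embeddings of the $G$-conjugates of $\e$, and then to invoke Dirichlet's unit theorem together with the weak Minkowski property just established in Proposition \ref{unit}.

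Let $r := \rk_\Z \O_M^*$ and define the logarithmic embedding
\[
\lambda : \O_M^* \to \R^{|E|}, \qquad u \mapsto \bigl(\log|\tau u|\bigr)_{\tau \in E}.
\]
Then the $j$-th column of the matrix in the statement is exactly $\lambda(g_j \e)$. Since the $\Z$-module generated by these columns sits torsion-free inside $\R^{|E|}$, its $\Z$-rank equals the $\R$-dimension of its $\R$-span, which is the rank of the matrix.

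Because $\e$ is a weak Minkowski unit, the $\Z$-module $N := \langle g\e : g \in G\rangle$ has finite index in $\O_M^*$, so $\rk_\Z N = r$. The kernel of $\lambda$ on $\O_M^*$ is precisely the group of roots of unity, since $|\tau u| = 1$ for every $\tau \in E$ forces all conjugates of $u$ onto the unit circle and hence, by Kronecker's theorem, $u$ to be a root of unity. Thus $\lambda$ preserves rank modulo torsion, giving $\rk_\Z \lambda(N) = r$. By Dirichlet's unit theorem $\lambda(\O_M^*)$ is a lattice of rank $r$ in $\R^{|E|}$: in the totally real case it is a full lattice in the hyperplane $\sum_\tau x_\tau = 0$, while in the complex case one additionally has $x_\tau = x_{\bar\tau}$, cutting out an $r$-dimensional subspace in which the image is again a full lattice. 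Since $\lambda(N) \subseteq \lambda(\O_M^*)$ and $\lambda(N)$ already has rank $r$, the column span has $\Z$-rank exactly $r$, and therefore so does the matrix.

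The only delicate point is the book-keeping for complex $M$, where the rows indexed by $\tau$ and $\bar\tau$ coincide. However, duplicated rows do not change the rank, and Dirichlet's theorem still delivers rank $r$ for $\lambda(\O_M^*)$ in that case, so no separate argument is needed.
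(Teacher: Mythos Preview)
Your approach is correct and takes a genuinely different route from the paper's. The paper argues directly from the construction in Proposition~\ref{unit}: there a specific $r\times r$ submatrix $(\log|\tau_ig_j\e|)_{i,j=1,\ldots,r}$ was shown to be invertible, and the proof of the corollary then checks that enlarging to the full $|E|\times|G|$ matrix adds only dependent rows, via the norm relation $\sum_i\log|\tau_ig_j\e|=0$ and, in the complex case, the relation $|\tau x|=|\bar\tau x|$. Your argument instead works for \emph{any} weak Minkowski unit by identifying the columns as the images $\lambda(g_j\e)$ under the logarithmic embedding and invoking Dirichlet's unit theorem together with the finite-index property; this is cleaner and does not rely on the particular $\e$ built in the proposition, which fits the generality of the corollary's statement better.

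One justification should be tightened. You assert that because the $\Z$-module generated by the columns sits torsion-free inside $\R^{|E|}$, its $\Z$-rank equals the $\R$-dimension of its span. Torsion-freeness alone does not give this: for instance $\Z+\Z\sqrt2\subset\R$ is torsion-free of $\Z$-rank $2$ but has $\R$-span of dimension $1$. What you actually need is discreteness. Fortunately the ingredient is already in your argument: $\lambda(\O_M^*)$ is a lattice of rank $r$ by Dirichlet, and $\lambda(N)$ is a finite-index sublattice, so its $\Z$-rank does coincide with the dimension of its $\R$-span. With that correction the proof is complete.
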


\begin{proof}
In Proposition \ref{unit} we showed that for specially numbered $\tau_i$ and $g_j$ the matrix $(\log|\tau_ig_j\e|)_{i,j=1,\ldots,r}$ is invertible. Thus it has rank equalling $r:=\rk_\Z O_M^*$. If we increase the size of the matrix by one row and one column to $(\log|\tau_ig_j\e|)_{i,j=1,\ldots,r+1}$, the rank remains $r$ because for fixed $j$ \[\displaystyle\sum_{i=1}^{r+1}\log|\tau_ig_j(\e)|=0.\] So in the case of totally real extensions, we have finished the proof. In the case of a complex extension, when we change the matrix to $\left(\log|\tau_ig_j(\e)|\right)_{\tau_i\in E,\,g_j\in G}$ each new row is dependent on a previous row because, for any $\tau\in E$, $x\in M$, $|\tau(x)||\bar{\tau}(x)|=|\tau(x)|^2$. Thus the rank remains $r$. Finally we note that swapping rows or columns does not affect the rank of a matrix, hence we can number the rows and columns of $\left(\log|\tau_ig_j(\e)|\right)_{\tau_i\in E,\,g_j\in G}$ independently of the special numbering used in the proof of the proposition.
\end{proof}

{  In what follows  $c$ will always denote an element of $G$ that is induced by complex conjugation}, i.e., the elements defined in Lemma \ref{com} below.

  \begin{lem}\label{com}
  Let $M$ be a complex Galois extension of $\Q$. Then for all $\tau\in E$ there exists $c\in G$ such that the following diagram commutes.  \[\xymatrix{M\ar_{c}[d]\ar^{\overline{\tau}}[dr]\\
M\ar_{\tau}[r]&\C
}\]
Moreover elements induced by complex conjugation have the following properties. 
\begin{itemize}
\item Let $c$ and $c'$ be elements of $G$ induced by complex conjugation, then there exists an $h\in G$ such that $c=hc'h^{-1}$. 
\item Also, for any $c$ and $h$ in $G$,  $hch^{-1}$ is induced by complex conjugation.
  \end{itemize}  
    \end{lem}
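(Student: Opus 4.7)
\medskip

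\noindent\textbf{Proof sketch.}

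The plan is to produce $c$ directly as $\tau^{-1}\circ\overline\tau$ and then read off the two bullet points from the transitivity of the $G$-action on $E$.

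First I would check that $\overline\tau$ and $\tau$ have the same image in $\C$, so that the composition $\tau^{-1}\circ\overline\tau$ makes sense. Since $M/\Q$ is Galois (hence normal), the subfield $\tau(M)\subset\C$ is the splitting field in $\C$ of any set of polynomials defining $M$, and so it is stable under every $\Q$-automorphism of $\C$. In particular complex conjugation sends $\tau(M)$ to itself, which gives $\overline\tau(M)=\tau(M)$. Therefore $c:=\tau^{-1}\circ\overline\tau$ is a well-defined $\Q$-automorphism of $M$, i.e.\ an element of $G$, and by construction $\tau\circ c=\overline\tau$; this is exactly the commutativity of the displayed diagram.

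For the two bullet points I would use that $G$ acts simply transitively on $E$ by pre-composition: given $\tau,\tau'\in E$ there is a unique $h\in G$ with $\tau'=\tau\circ h$. The key identity is
\[
\overline{\tau'}(m)=\overline{\tau(h(m))}=\overline\tau(h(m)),
\]
so $\overline{\tau'}=\overline\tau\circ h$. Now let $c$ come from $\tau$ and $c'$ come from $\tau'$ as in the first part. Substituting into $\tau'\circ c'=\overline{\tau'}$ gives $\tau\circ h\circ c'=\overline\tau\circ h=\tau\circ c\circ h$, and cancelling $\tau$ yields $c'=h^{-1}ch$, which is the first bullet point. For the second bullet point, fix $c$ arising from $\tau$ and an arbitrary $h\in G$; set $\tau'=\tau\circ h^{-1}$ and let $c'$ be the element of $G$ associated to $\tau'$ by the first part of the lemma. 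By the identity just derived (applied with $h$ replaced by $h^{-1}$), $c'=hch^{-1}$, so $hch^{-1}$ is induced by complex conjugation via $\tau'$.

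The only genuinely non-trivial step is the stability of $\tau(M)$ under complex conjugation, and this is really just the normality of $M/\Q$ repackaged; everything else is formal manipulation with the transitive right $G$-action on $E$. No additional transcendence input is needed.
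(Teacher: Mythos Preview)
Your proof is correct. The paper in fact states this lemma without proof, treating it as standard; your argument---building $c$ as $\tau^{-1}\circ\overline\tau$ via the normality of $M/\Q$ (so that $\tau(M)$ is stable under complex conjugation) and then deducing the two conjugacy statements from the simply transitive right $G$-action on $E$---is exactly the natural way to fill in what the paper omits.
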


\section{Leopoldt's Conjecture: Two Ways}\label{2ways}

We state two precise versions of Leopoldt's Conjecture that are equivalent for Galois number fields.

Let $\Delta$ be the diagonal embedding of the {global} units $\O_M^*$ into the product of local units $\prod_{\p| p}\O_\p^*$ and define $X:=\Delta^{-1}\left(\prod_{\p|p}\O_{\p,1}^*\right)$, i.e., $X$ is the inverse image of the product of the principal local units. Let  $\overline{\Delta(X)}$  denote the topological closure of $\Delta(X)$ in $\prod_{\p|p}\O_{\p,1}^*$.  

\begin{conj}[Leopoldt]\label{DX}
Let $M$ be a number field. Then  the $\Z$-rank of $\O_M^*$ equals the $\Z_p$-rank of $\overline{\Delta(X)}$.
\end{conj}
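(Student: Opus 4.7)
The statement at hand is Leopoldt's Conjecture itself, so a complete proof is far beyond the reach of a few paragraphs; I will sketch the strategy that underlies all known partial results (Ax, Brumer, Laurent, and ultimately Mih\u{a}ilescu's announced proof), since that is what the paper builds on later.

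The plan is to reduce the rank statement to a question about $\Q_p$-linear independence of $p$-adic logarithms, and then to invoke transcendence theory. First I would dispose of the easy inequality $\rk_{\Z_p}\overline{\Delta(X)}\leq \rk_\Z\O_M^*$: since $X$ has finite index in $\O_M^*$ and $\Delta$ is a $\Z$-module homomorphism, $\Delta(X)$ is generated by at most $r:=\rk_\Z\O_M^*$ elements over $\Z$, and inside the pro-$p$ group $\prod_{\p\mid p}\O^*_{\p,1}$ its $\Z_p$-closure is therefore topologically generated by at most $r$ elements. The nontrivial content is the reverse inequality.

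Next I would apply $\log_p$ coordinatewise to pass from $\prod_{\p\mid p}\O^*_{\p,1}$ (a multiplicative pro-$p$ group) to $\prod_{\p\mid p}M_\p$ (an additive $\Q_p$-vector space), using that $\log_p$ restricted to the principal local units is an injective $\Z_p$-module homomorphism onto an open $\Z_p$-submodule. The $\Z_p$-rank of $\overline{\Delta(X)}$ equals the $\Q_p$-dimension of the $\Q_p$-span of the vectors
\[
L(u):=\bigl(\log_p(\sigma_1 u),\ldots,\log_p(\sigma_n u)\bigr)\in \Q_p^n,
\]
as $u$ runs through a $\Z$-basis $u_1,\ldots,u_r$ of $X$ modulo torsion, where the $\sigma_i$ enumerate $E_p=\Emb(M,\C_p)$ and coordinates are grouped by prime. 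Thus I must show that the $r\times n$ matrix $\bigl(\log_p(\sigma_i u_j)\bigr)$ has rank $r$ over $\Q_p$.

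The main obstacle is exactly this rank lower bound: a priori, nothing prevents a $\Q_p$-linear relation among the rows from appearing, even though the $u_j$ are multiplicatively independent. I would attack it by Galois-equivariance. Using Proposition~\ref{unit}, choose a weak Minkowski unit $\e$ so that the $g\e$ span a finite-index submodule of $\O_M^*$; it then suffices to bound the rank of the square matrix $\bigl(\log_p(\sigma_i g_j\e)\bigr)_{i,j}$, whose archimedean analogue has full rank $r$ by Corollary~\ref{r}. Now invoke a Baker--Brumer type result: if $\alpha_1,\ldots,\alpha_k\in \overline{\Q}^\times$ have $p$-adic logarithms that are $\overline{\Q}$-linearly independent, then those logarithms are $\overline{\Q_p}$-linearly independent. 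Applied to the Galois conjugates of $\e$, this forces the expected rank provided one can establish $\overline{\Q}$-linear independence of the $\log_p(\sigma_i g_j\e)$, which is the genuinely hard ingredient. This last step is where one must specialize (abelian extensions let one diagonalize by characters and use Baker directly; CM fields reduce to their real subfields; general fields require much stronger transcendence input such as the $p$-adic Schanuel Conjecture used elsewhere in this paper, or the elaborate argument of Mih\u{a}ilescu). In the present paper, Schanuel will be assumed precisely to bypass this obstacle in the variation we actually prove.
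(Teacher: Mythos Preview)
You have correctly recognized that the statement is a \emph{conjecture}, not a theorem: the paper does not offer an unconditional proof of Conjecture~\ref{DX}, and indeed none is known. There is therefore no ``paper's own proof'' to compare against in the strict sense.

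That said, the paper does prove the conditional result (Theorem~\ref{SchL}) that the $p$-adic Schanuel Conjecture implies Leopoldt's Conjecture for finite Galois extensions, and your sketch is compatible with that argument. Your easy inequality $\rk_{\Z_p}\overline{\Delta(X)}\leq \rk_\Z\O_M^*$ is exactly what the paper uses for one direction in the proof of Theorem~\ref{SchL} (via $\Z\subset\Z_p$ and Proposition~\ref{equivV}). For the hard direction, the paper does not invoke Baker--Brumer directly but instead specializes Theorem~\ref{rem} (the main inequality, proved under $p$-adic Schanuel) to the case $S_p=E_p$, which yields $\rk(\log_p(\sigma_i g_j\e))\geq\rk(\log|\tau_i g_j\e|)=r$ and hence equality. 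Your outline of passing to the matrix $(\log_p(\sigma_i g_j\e))$ via a weak Minkowski unit and then appealing to transcendence theory matches this route; you simply survey a broader landscape (Ax--Brumer for abelian fields, Laurent, Mih\u{a}ilescu) whereas the paper commits to Schanuel from the outset. No gap in your sketch, and the distinction is one of generality of exposition rather than mathematical content.
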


Leopoldt's Conjecture can also be stated in a way that contains a matrix with $p$-adic logarithms and in so doing partially reveals the conjecture's relationship with the $p$-adic regulator.

\begin{conj}[Leopoldt]\label{mat}
Let  $M$ be a finite Galois extension of $\Q$, with  $g_j\in G=\Gal(M/\Q)$ and $\sigma_i\in E_p:=\Emb(M,\C_p)$. 
Let
 $\e\in \O_M^*$ be such that $\{g_1\e,\ldots,g_n\e\}$ generates a finite index subgroup of $\O_M^*$ and such that $|\sigma_i g_j \e-1|_p<1$ for all $i,j\in\{1,\ldots,n\}$.
 Then \[\rk\left(\log_p(\sigma_i g_j \e)\right)_{\sigma_i\in E_p,\,g_j\in G}=\rk_\Z \O_M^*.\]
\end{conj}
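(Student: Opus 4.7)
The plan is to recognize Conjecture \ref{mat} as the $p$-adic-logarithm translation of Conjecture \ref{DX}. I will establish the identity
\[
\rk\left(\log_p(\sigma_i g_j \e)\right)_{\sigma_i\in E_p,\,g_j\in G} \;=\; \rk_{\Z_p}\overline{\Delta(X)},
\]
after which the equality with $\rk_\Z\O_M^*$ is exactly the content of Conjecture \ref{DX}.

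First I would translate the hypotheses on $\e$ onto the $\overline{\Delta(X)}$ side. The assumption $|\sigma_i g_j\e-1|_p<1$ for every $\sigma_i\in E_p$ places each $\Delta(g_j\e)$ inside $\prod_{\p|p}\O^*_{\p,1}$, and the hypothesis that $\{g_1\e,\ldots,g_n\e\}$ generates a finite-index subgroup of $\O_M^*$ then shows that it generates a finite-index subgroup of $X$. Consequently the closed $\Z_p$-submodule $N\subset\prod_{\p|p}\O^*_{\p,1}$ generated by $\Delta(g_1\e),\ldots,\Delta(g_n\e)$ sits with finite index inside $\overline{\Delta(X)}$, so $\rk_{\Z_p}\overline{\Delta(X)}=\rk_{\Z_p}N$.

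Next I would apply $\log_p$ componentwise. Each $\log_p\colon\O^*_{\p,1}\to M_\p$ has finite kernel (the $p$-power roots of unity of $M_\p$ lying in $\O^*_{\p,1}$) and is therefore rank-preserving, so $\rk_{\Z_p}N$ equals the $\Z_p$-rank of the $\Z_p$-submodule of $\prod_{\p|p}M_\p$ generated by the $n$ vectors $v_j:=(\log_p(g_j\e)_\p)_{\p|p}$. Since the ambient space is a finite-dimensional $\Q_p$-vector space, this $\Z_p$-rank equals $\dim_{\Q_p}\sum_j\Q_p v_j$, and the dimension of a linear span is preserved by base change to $\C_p$. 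The canonical $\C_p$-algebra isomorphism $M_\p\otimes_{\Q_p}\C_p\cong\prod_{\sigma}\C_p$ (indexed by the embeddings $\sigma\in E_p$ inducing $\p$) sends $x\otimes 1$ to $(\sigma x)_\sigma$; since $\log_p$ is defined by a power series it commutes with embeddings, and so under the assembled isomorphism $\prod_\p M_\p\otimes_{\Q_p}\C_p\cong\C_p^{E_p}$ the vector $v_j\otimes 1$ becomes exactly the $j$-th column $(\log_p(\sigma_i g_j\e))_{\sigma_i\in E_p}$ of the matrix of the conjecture. The $\C_p$-dimension of the span of these columns is the matrix rank.

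The main obstacle is the bookkeeping in the last step: keeping straight the three incarnations of the $p$-adic logarithm, as an element of $\O^*_{\p,1}$, of $M_\p$, and of $\C_p$ after an embedding, and verifying that the decomposition $M_\p\otimes_{\Q_p}\C_p\cong\prod_\sigma\C_p$ sends the $\p$-component of $\log_p\Delta(u)$ precisely to $(\log_p(\sigma u))_\sigma$. A secondary technical point is confirming that the topological closure does not enlarge the $\Z_p$-rank, which rests on the compactness of $\prod_{\p|p}\O^*_{\p,1}$ together with the fact that a finitely generated $\Z_p$-submodule of a free $\Z_p$-module of finite rank is already closed.
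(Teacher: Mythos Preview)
Your reduction of Conjecture~\ref{mat} to Conjecture~\ref{DX} via the identity $\rk(\log_p(\sigma_ig_j\e))_{\sigma_i\in E_p,\,g_j\in G}=\rk_{\Z_p}\overline{\Delta(X)}$ is correct, and this equivalence is exactly what the paper establishes (as the case $S_p=E_p$ of Proposition~\ref{equivV}). Your route, however, is genuinely different from the paper's. The paper argues the two inequalities separately and concretely: it uses Lemma~\ref{conv} to convert a multiplicative $\Z_p$-dependence holding in every $M_\p$ into a family of dependences in a single fixed $M_{\p_0}$ after Galois translation, and in the direction $\rk_{\Z_p}\overline{\Delta(X)}\le\rk(\log_p(\sigma_ig_j\e))$ it must take a trace over the decomposition group $D_{\p_0}$ to descend from a $\C_p$-linear relation among the $\log_p(\sigma_ig_j\e)$ to a $\Z_p$-linear one before it can exponentiate back. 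You instead work structurally: push everything through $\log_p$ into the finite-dimensional $\Q_p$-vector space $\prod_{\p\mid p} M_\p$, identify $\Z_p$-rank with the $\Q_p$-dimension of the span there, then base-change to $\C_p$ via $M_\p\otimes_{\Q_p}\C_p\cong\prod_{\sigma}\C_p$ and read the matrix columns off directly. Your argument is cleaner and sidesteps the decomposition-group trace trick entirely; the paper's argument is more elementary in that it avoids tensor products, at the cost of that extra maneuver. Both approaches generalize without change to the partial setting $\Gamma\subsetneq\{\p:\p\mid p\}$ that the paper is really after. The bookkeeping you flag as the main obstacle is routine, and your remark on why taking closures does not increase the $\Z_p$-rank is correct.
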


Since $\rk_\Z \O_M^*=\rk\left(\log|\tau_i g_j\e|\right)_{{\tau_i\in E},\,g_j\in G}$, the conclusion of Leopoldt's Conjecture can be stated in terms of the equality of the ranks of two matrices 
\[\rk\left(\log_{{p}}(\sigma_i g_j\e)\right)_{{\sigma_i\in E_p},\,g_j\in G}
=\rk\left(\log|\tau_i g_j\e|\right)_{{\tau_i\in E},\,g_j\in G}.\]
Note that the entries of the matrix on the left are $p$-adic logarithms, while the entries on the right are obtained by first taking the complex modulus and then taking the real logarithm. Also notice that the rows of the left matrix are indexed by elements in $E_p$ whereas the rows of the right matrix are indexed by elements in $E$.

\section{The Variation}\label{var}

Our variation can be stated precisely in a manner similar to Conjecture \ref{DX}.
Let $\Gamma\subset\{\p\subset \O_M\,\big|\,\p|p\}$ and $\Delta_\Gamma$ be the diagonal embedding
\[\Delta_\Gamma:\O_M^*\rightarrow\prod_{\p\in \Gamma}\O^*_{\p}.\]
Define $Y:=\Delta_\Gamma^{-1}\left(\prod_{\p\in\Gamma}\O_{\p,1}^*\right)$, i.e., $Y$ is the inverse image of the product of some principal local units.
Let  $\overline{\Delta_\Gamma(Y)}$  denote the topological closure of $\Delta_\Gamma(Y)$ in $\prod_{\p\in\Gamma}\O_{\p,1}^*$.  
We will consider the value of $\rk_{\Z_p}(\overline{\Delta_\Gamma(Y)})$. Equivalently, mimicking the relationship between Conjectures \ref{DX} and \ref{mat}, we will consider  the rank of a matrix whose entries are $p$-adic logarithms. In fact, the matrix we consider will be the one from Conjecture \ref{mat} with rows removed, i.e., with the $\sigma_i$ in a subset of $E_p$.

Fix $\p_0\in\Gamma$. Call the map $M\rightarrow M_{\p_0}\rightarrow \C_p$, $\sigma_0$. It is in $E_p$. Next define the surjection
\begin{align}
E_p&\rightarrow\{\p|p\}\notag\\
\sigma=\sigma_0g&\mapsto g^{-1}\p_0\notag
\end{align}
Finally, define $S_p:=\{\sigma=\sigma_0g\,|\,g^{-1}\p_0\in\Gamma\}$. 

\begin{prop}\label{equivV}
Let $M$ be a finite Galois extension of $Q$. Let $\e$ be a weak Minkowski unit such that   $|\sigma_i g_j \e-1|_p<1$ for all $g_j\in G$ and $\sigma_i\in E_p$. Keep the definitions and notations defined above. Then
\[\rk_{\Z_p}(\overline{\Delta_\Gamma(Y)})=\rk\left(\log_p(\sigma_i g_j\e)\right)_{\sigma_i\in S_p,\,g_j\in G}.\]
\end{prop}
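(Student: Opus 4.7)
My plan is to pass from the $\Z_p$-rank of a profinite closure to the $\Q_p$-dimension of an explicit finite-dimensional subspace, and then match that subspace with the column span of the matrix. The bridge is the componentwise $p$-adic logarithm $\log_p : \prod_{\p\in\Gamma}\O_{\p,1}^* \to \prod_{\p\in\Gamma} M_\p$. It is a continuous group homomorphism (multiplicative to additive) whose kernel is finite, namely the $p$-power roots of unity in the various $M_\p$; hence it preserves $\Z_p$-rank and commutes with taking $p$-adic closure. Because its target is a finite-dimensional $\Q_p$-vector space, any $\Z_p$-submodule has $\Z_p$-rank equal to the $\Q_p$-dimension of its $\Q_p$-linear span. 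Let $V$ be the $\Q_p$-span of $\{(\log_p\iota_\p(u))_{\p\in\Gamma} : u\in Y\}$ inside $\prod_{\p\in\Gamma} M_\p$, where $\iota_\p:M\hookrightarrow M_\p$ is the canonical embedding. Then
\[
\rk_{\Z_p}\overline{\Delta_\Gamma(Y)} \;=\; \dim_{\Q_p} V.
\]

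Next I would pin down $V$ using the weak Minkowski unit $\e$. For each $\p \in \Gamma$ the composition $M \xrightarrow{\iota_\p} M_\p \hookrightarrow \C_p$ realises some $\sigma \in S_p$, and as the second arrow is a $\Q_p$-isometry, the hypothesis $|\sigma g_j\e - 1|_p < 1$ yields $|\iota_\p(g_j\e) - 1|_p < 1$; hence every $g_j\e$ lies in $Y$. Since $\{g_1\e,\dots,g_n\e\}$ generates a finite-index subgroup of $\O_M^*$ and $Y$ has finite index in $\O_M^*$, these elements generate a finite-index subgroup of $Y$. A finite index becomes invertible after tensoring with $\Q_p$, so $V$ equals the $\Q_p$-span of the $n$ vectors $\vec{v}_j := (\log_p\iota_\p(g_j\e))_{\p\in\Gamma}$ for $j=1,\dots,n$.

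The last step is to compare $\dim_{\Q_p} V$ with the $\C_p$-rank of the matrix. For each $\p \in \Gamma$ the embeddings $M_\p\hookrightarrow\C_p$ are in bijection with the elements of $S_p$ above $\p$, and together they realise the $\C_p$-algebra isomorphism $\C_p\otimes_{\Q_p} M_\p \cong \C_p^{[M_\p:\Q_p]}$. Assembling these over $\Gamma$ gives an injective $\Q_p$-linear map $\Psi : \prod_{\p\in\Gamma} M_\p \hookrightarrow \C_p^{S_p}$ whose image is a $\Q_p$-structure on $\C_p^{S_p}$; consequently any $\Q_p$-linearly independent subset of $\Psi\bigl(\prod_\p M_\p\bigr)$ remains $\C_p$-linearly independent in $\C_p^{S_p}$. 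Each $\Q_p$-embedding into $\C_p$ commutes with $\log_p$ (the defining power series has rational coefficients, and the extension rule $\log_p(\zeta p^s)=0$ is preserved by any such embedding), so $\Psi(\vec{v}_j)$ is exactly the $j$th column $(\log_p\sigma_i g_j\e)_{\sigma_i\in S_p}$ of the matrix. Therefore
\[
\dim_{\Q_p} V \;=\; \dim_{\Q_p}\Psi(V) \;=\; \dim_{\C_p}\bigl(\C_p\cdot\Psi(V)\bigr) \;=\; \rk\bigl(\log_p(\sigma_i g_j\e)\bigr)_{\sigma_i\in S_p,\,g_j\in G},
\]
which combined with the first paragraph proves the proposition. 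The delicate point is the middle equality: a $\Q_p$-subspace of $\C_p^N$ can collapse upon extension of scalars to $\C_p$, and what prevents this here is precisely the $\Q_p$-structure on $\C_p^{S_p}$ coming from the decomposition $\C_p\otimes_{\Q_p} M_\p \cong \prod_{\sigma:M_\p\to\C_p}\C_p$.
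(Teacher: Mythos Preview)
Your argument is correct and proceeds along a genuinely different line from the paper's own proof. The paper establishes the equality as two inequalities by contradiction: for $\rk_{\Z_p}\overline{\Delta_\Gamma(Y)}\ge \rk(\text{matrix})$ it uses Lemma~\ref{conv} to move a hypothetical $\Z_p$-dependence between the various $M_\p$ and a single $M_{\p_0}$, then takes $\log_p$; for the reverse inequality it starts from a $\C_p$-linear column relation, observes that the decomposition group $D_{\p_0}$ permutes the relevant rows, and \emph{averages over $D_{\p_0}$} (a trace) to force the coefficients down into $\Q_p$, then exponentiates back. Your proof replaces both halves by the single structural observation that $\prod_{\p\in\Gamma}M_\p$ is a $\Q_p$-form of $\C_p^{S_p}$ via $\C_p\otimes_{\Q_p}M_\p\cong\prod_{\sigma:M_\p\hookrightarrow\C_p}\C_p$; flatness of $\C_p$ over $\Q_p$ then gives $\dim_{\Q_p}V=\dim_{\C_p}\bigl(\C_p\cdot\Psi(V)\bigr)$ in one stroke. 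In effect, the paper's trace trick is exactly the hands-on shadow of your $\Q_p$-structure statement: summing over $D_{\p_0}$ recovers the $\Q_p$-rational projection that your tensor identity supplies for free. Your route is cleaner and makes the role of the decomposition group implicit; the paper's route is more elementary in that it never names $\C_p\otimes_{\Q_p}M_\p$ and works entirely with explicit relations, which may be preferable for readers less comfortable with base change.
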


Note that when $\Gamma$ contains all primes above $p$ then $S_p=E_p$, and this proposition claims that the two versions of Leopoldt's Conjecture stated above are equivalent.

We will need a lemma.
\begin{lem}\label{conv} Let $M$ be a finite Galois extension of $\Q$. Let $u_1,\ldots, u_t\in \O_M^*$ and $a_1,\ldots,a_t\in \Z_p$ with at least one non-zero. The product $u_1^{a_1}\cdots u_t^{a_t}=1$ in $M_\p$ for all $\p\in\Gamma$ ($\Gamma$ is any non-empty subset of primes above $p$) if and only if, for a fixed place $\p_0\in\Gamma$, $(hu_1)^{a_{1}}\cdots (hu_t)^{a_{t}}=1$ in $M_{\p_0}$ for all $h\in H:=\{h\in G\,|\,h\p=\p_0 \mbox{ for some }\p\in\Gamma\}$.
\end{lem}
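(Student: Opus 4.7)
The plan is to exploit the fact that any $h\in G$ carrying a prime $\p$ to $\p_0$ induces a continuous field isomorphism between the two completions, combined with the transitive action of $G$ on the primes above $p$.

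First, I would observe that if $h\in G$ satisfies $h\p=\p_0$, then $h$ is an isometry from $(M,|\cdot|_\p)$ to $(M,|\cdot|_{\p_0})$, and hence extends uniquely to a continuous field isomorphism $\tilde h\colon M_\p \to M_{\p_0}$. Being continuous and multiplicative, $\tilde h$ commutes with $\Z_p$-exponentiation on the principal local units: $\tilde h(v^a)=\tilde h(v)^a$ for every $a\in\Z_p$ and every $v\in\O_{\p,1}^*$. Moreover, $\tilde h(u)=hu$ for any $u\in M\subset M_\p$.

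For the forward direction, I would fix an arbitrary $h\in H$. By the definition of $H$, the prime $\p:=h^{-1}\p_0$ belongs to $\Gamma$, so the hypothesis gives $u_1^{a_1}\cdots u_t^{a_t}=1$ in $M_\p$; applying $\tilde h$ to both sides and using its compatibility with $\Z_p$-powers produces $(hu_1)^{a_1}\cdots(hu_t)^{a_t}=1$ in $M_{\p_0}$. For the reverse direction, I would fix an arbitrary $\p\in\Gamma$ and, invoking the transitivity of $G$ on primes above $p$, choose $h\in G$ with $h\p=\p_0$; then $h\in H$, and applying $\tilde h^{-1}$ to the given identity in $M_{\p_0}$ returns $u_1^{a_1}\cdots u_t^{a_t}=1$ in $M_\p$.

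The only subtlety is that the expressions $u^a$ for $a\in\Z_p$ must be unambiguously defined; this is guaranteed in the intended application (Proposition \ref{equivV}), since the $u_i$ appearing there are Galois conjugates of a weak Minkowski unit chosen via Proposition \ref{unit} so that all of its $\sigma$-images lie in $\O^*_{\p,1}$. Everything then takes place inside a pro-$p$ group on which continuous $\Z_p$-exponentiation passes through $\tilde h$, and no further argument is required.
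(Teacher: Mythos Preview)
Your argument is correct and is essentially the paper's own proof, just packaged more abstractly: the paper makes the continuity explicit by choosing integer sequences $a_{j,n}\to a_j$ and tracking convergence through the isometry relation $|x|_{\p}=|hx|_{\p_0}$, whereas you bundle the same content into the statement that $h$ extends to a continuous isomorphism $\tilde h\colon M_\p\to M_{\p_0}$ compatible with $\Z_p$-exponentiation. The underlying mechanism---Galois acting by isometries between completions, hence preserving limits---is identical in both.
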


\begin{proof}
If $u_1^{a_1}\cdots u_t^{a_t}=1$ in $M_\p$ for all $\p\in\Gamma$ then there exists $a_{j,n}\in \Z$ such that $p$-adically $a_{j,n}\rightarrow a_j$ and $u_1^{a_{1,n}}\cdots u_t^{a_{t,n}}\rightarrow1$ in $M_\p$ for all $\p\in\Gamma$.
Recall that the absolute values corresponding to each prime are related by $|x|_{\p_i}=|gx|_{\p_j}$ for $g\in G$ such that $\p_j=g\p_i$. Thus  $(hu_1)^{a_{1,n}}\cdots (hu_t)^{a_{t,n}}\rightarrow1$ in $M_{\p_0}$ for all $h\in H$. Hence $(hu_1)^{a_{1}}\cdots (hu_t)^{a_{t}}=1$ in $M_{\p_0}$ for all $h\in H$.

The argument for the converse is similar and is left to the reader.
\end{proof}

\begin{proof}[Proof of Proposition \ref{equivV}]

First we prove that \[\rk_{\Z_p}(\overline{\Delta_\Gamma(Y)})\geq\rk\left(\log_p(\sigma_i g_j\e)\right)_{\sigma_i\in S_p,\,g_j\in G}.\] 

Let \[\rk\left(\log_p(\sigma_i g_j\e)\right)_{\sigma_i\in S_p,\,g_j\in G}=t.\] 
For contradiction assume \[\rk_{\Z_p}(\overline{\Delta_\Gamma(Y)})<t.\] Then for all $t$-sized subsets  $\{g_1,\ldots,g_t\}\subset G$, the $\{\Delta(g_1\e),\ldots, \Delta(g_t\e)\}$ are $\Z_p$-dependent. So there exists at least one non-zero $a_j\in\Z_p$ such that $(g_1\e)^{a_1}\cdots(g_t\e)^{a_t}=1$ in $M_\p$ for all $\p\in\Gamma$. Thus for a fixed $\p_0$, Lemma \ref{conv} guarantees that   
\begin{equation}\label{Prod}
(hg_1\e)^{a_{1}}\cdots (hg_t\e)^{a_{t}}=1
\end{equation}
 in $M_{\p_0}$ for all $h\in H$. 

As above, $\sigma_0$ is the map $M\rightarrow M_{\p_0}\rightarrow \C_p$.
Apply the $p$-adic logarithm to Equation \ref{Prod}:
\begin{equation}\label{22}
\sum_{j=1}^t a_j\log_p(\sigma_0 hg_j\e)=0
\end{equation}
 for all $h\in H$.
Since the defining conditions on $H$ and $S_p$ are related, Equation \ref{22} becomes
\[\sum_{j=1}^t a_j\log_p(\sigma_i g_j\e)=0\]
 for all $\sigma_i\in S_p.$
Recall that this was true for all $t$-sized subsets of $G$. So we have contradicted the  fact that $\rk\left(\log_p(\sigma_i g_j\e)\right)_{\sigma_i\in S_p,\,g_j\in G}=t$.

Second we prove that \[\rk_{\Z_p}(\overline{\Delta_\Gamma(Y)})\leq\rk\left(\log_p(\sigma_i g_j\e)\right)_{\sigma_i\in S_p,\,g_j\in G}.\]

Let $\rk_{\Z_p}(\overline{\Delta_\Gamma(Y)})=t$. So there exists some $t$-sized subset of $\{g_1\e,\ldots,g_n\e\}$ for which the elements are $\Z_p$-independent in $\overline{\Delta_\Gamma(Y)}$. For illustration purposes we will assume that  $g_1\e,\ldots,g_t\e$ are $\Z_p$-independent.

 For contradiction  suppose $\rk\left(\log_p(\sigma_i g_j\e)\right)_{\sigma_i\in S_p,\,g_j\in G}<t$. 
Then there exists at least one non-zero $a_j\in \C_p$ such that 
\[\sum_{j=1}^t a_j\log_p(\sigma_i g_j\e)=0 \mbox{ for all }\sigma_i\in S_p.\]
Hence with $\p_0$ and $\sigma_0$ fixed as above we have
\begin{equation}\label{eq1}\sum_{j=1}^ta_j\log_p(\sigma_0h g_j\e)=0 \mbox{ for all }h\in H.\end{equation}
Recall 
\begin{align*}
H&:=\{h\in G\,|\,h\p=\p_0 \mbox{ for some }\p\in\Gamma\}\\
&\phantom{:}=\{h\in G\,|\,h^{-1}\p_0=\p \mbox{ for some }\p\in\Gamma\}.  
\end{align*}

Let $D_{\p_0}$ be the decomposition group of $\p_0$. Its elements permute those of $H$. Let $d\in D_{\p_0}$. Then $d h\in H$ because $h^{-1}d^{-1}\p_0=h^{-1}\p_0\in\Gamma$.
 Apply $d$ to Equation (\ref{eq1}), note that Galois elements commute with the $p$-adic logarithm,
\begin{equation*}\sum_{j=1}^t d a_j\log_p(d\sigma_0h g_j\e)=0\mbox{ for all }h\in H,\end{equation*}
and
\begin{equation*}\label{eq2}\sum_{j=1}^t d a_j\log_p(\sigma_0 hg_j\e)=0\mbox{ for all }h\in H.\end{equation*}
 Letting $d$ vary we have
\begin{equation}\label{eq3}
\sum_{d\in D_{\p_0}} \sum_{j=1}^t d a_j\log_p(\sigma_0 hg_j\e)=\sum_{j=1}^t  \mbox{Trace}_{M_{\p_0}/\Q_p}(a_j)\log_p(\sigma_0 hg_j\e)=0
\end{equation}
for all $h\in H$. Note $\mbox{Trace}(a_j)\in \Q_p$. We may assume at least one $a_j=1$, so $\mbox{Trace}(a_j)\neq 0$ for some $a_j$. After clearing denominators, Equation (\ref{eq3}) translates to: 
\[\sum_{j=1}^t b_j\log_p(\sigma_0 h g_j\e)=0\]
for all $h\in H$,
where $b_j\in\Z_p$ and at least one is non-zero.

There exists $b\in \Z$ 
 such that for all $j$ and for all $h$, the $bb_j\log_p(\sigma_0 hg_j\e)$ are in the region of $\C_p$ for which $\exp_p$ is defined. Hence \[\prod_{j=1}^t (\sigma_0 hg_j\e)^{bb_j}=1\]
for all $h\in H$.
So in  $M_{\p_0}$, 
$(hg_1\e)^{bb_{1}}\cdots (hg_t\e)^{bb_{t}}=1$
 for all $h\in H$. Lemma \ref{conv} implies that $(g_1\e)^{bb_{1}}\cdots (g_t\e)^{bb_{t}}=1$ in $M_\p$ for all $\p\in\Gamma$. Which contradicts the fact that $g_1\e,\ldots,g_t\e$ are $\Z_p$-independent.
\end{proof}

In summary the question posed in the introduction can be restated:

\medskip

\noindent {\bf Question.} What is the rank of the matrix $\left(\log_p(\sigma_i g_j\e)\right)_{\sigma_i\in S_p,\,g_j\in G}$?

\medskip

Recall that the conclusion of Leopoldt's Conjecture can be stated:
\[\rk\left(\log_{{p}}(\sigma_i g_j\e)\right)_{{\sigma_i\in E_p},\,g_j\in G}
=\rk\left(\log|\tau_i g_j\e|\right)_{{\tau_i\in E},\,g_j\in G}.\]
It seems reasonable that the answer to our question will involve removing rows from the matrix on the right-hand side. 

The remaining rows of the left-hand matrix are indexed by $S_p$, we need to define a subset of $E$ that will index the rows of the right-hand matrix.
 For each isomorphism  $\psi:\C_p\rightarrow\C$ consider the bijection between $E$ and $E_p$ as defined in  Lemma \ref{bij} and define  $S_\psi:=\{\psi.\sigma\,|\,\sigma\in S_p\}\subset E$.

We propose the following conjecture.

\begin{conj}\label{GLC}
Let $M$ be a finite Galois extension of $\Q$. Let $\e$ be a weak Minkowski unit $\e$ such that   $|\sigma_i g_j \e-1|_p<1$ for all $g_j\in G$ and $\sigma_i\in E_p$. Let $S_p$ be a subset of $E_p$. Let $\psi:\C_p\rightarrow\C$ be an isomorphism and for each $\psi$ let  $S_\psi$ be a subset of $E$ as defined previously. Then  
\[\rk(\log_p(\sigma_ig_j\e))_{\sigma_i\in S_p,\,g_j\in G}=\max_\psi\{\rk(\log|\tau_ig_j\e|)_{\tau_i\in S_\psi,\,g_j\in G}\}. \]
\end{conj}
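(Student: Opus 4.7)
The plan is to use the weak Minkowski unit $\e$ to reduce both sides of the conjectured equality to common algebraic data --- multiplicative relations among the $g\e$ in $M^\times$ --- and then compare these via the $p$-adic and classical Schanuel conjectures together with the bijection $\psi$ of Lemma~\ref{bij}. Concretely, let $V := \O_M^* \otimes_\Z \Q$, a $\Q$-vector space of dimension $r = \rk_\Z \O_M^*$ spanned by the images of $g\e$ (by Proposition~\ref{unit}). The $p$-adic matrix represents the $\Q$-linear map $\phi_p : V \to \C_p^{|S_p|}$, $u \mapsto (\log_p \sigma u)_{\sigma \in S_p}$, while the real matrix represents $\phi_\R^\psi : V \to \R^{|S_\psi|}$, $u \mapsto (\log|\tau u|)_{\tau \in S_\psi}$, and the two ranks in the conjecture are the dimensions of $\mathrm{im}(\phi_p \otimes \C_p)$ and $\mathrm{im}(\phi_\R^\psi \otimes \R)$.

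For $\rk_p \geq \rk_\R^\psi$ (for every $\psi$): starting from a $\C_p$-linear column dependency on the $p$-adic side, the trace-over-$D_{\p_0}$ argument in the proof of Proposition~\ref{equivV} produces $\Z_p$ coefficients $(b_g)$ and a formal product $\beta := \prod_g (g\e)^{b_g}$ that is trivial in every local field $M_\p$ with $\p \in \Gamma$. Applying $p$-adic Schanuel, this local triviality forces $\beta$ to come from a $\Q$-linear multiplicative identity in $M^\times / \text{torsion}$. Any such identity then produces an $\R$-linear dependency on the real side, since the images under each $\tau \in E$ (and a fortiori each $\tau \in S_\psi$) have $\log|\cdot| = 0$. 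Since the real side also carries the extra identifications $\log|\tau u| = \log|\overline{\tau} u|$ coming from complex conjugation (with no $p$-adic analog), its kernel is at least as large as the $p$-adic one, giving the desired inequality.

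For $\rk_p \leq \max_\psi \rk_\R^\psi$: I would choose $\psi$ so that $S_\psi$ meets each complex-conjugate pair at most once, thereby eliminating the extra conjugation-induced identifications. Using classical Schanuel together with a Dirichlet-type argument on the remaining embeddings, one would show that every $\R$-linear column dependency on the real side reduces to a multiplicative identity in $M^\times$, which pulls back to a $\C_p$-linear dependency on the $p$-adic side. Thus, for this choice of $\psi$, the kernels of $\phi_p \otimes \C_p$ and $\phi_\R^\psi \otimes \R$ are both determined by the same $\Q$-subspace of multiplicative relations, so the two ranks agree.

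The main obstacle will be bridging Schanuel's independence results (valid over $\bar\Q$) with the $\C_p$- and $\R$-linear independence statements needed for matrix ranks: one must rule out ``transcendental'' linear relations beyond those predicted by multiplicative structures in $M^\times$, which should be possible because every matrix entry arises from a single rank-$r$ source. A secondary subtlety is that the isomorphism $\psi : \C_p \to \C$ of Lemma~\ref{bij} is constructed via Zorn's lemma and has no topological content, so $\psi$ must be used only to transport purely algebraic data --- embeddings of $M$ and multiplicative relations in $M^\times$ --- never the logarithm values themselves.
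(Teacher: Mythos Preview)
The statement you are attempting to prove is stated in the paper as a \emph{conjecture}, not a theorem: the paper does not claim a proof in the general (complex, non-CM) case. What the paper establishes is the inequality $\geq$ (Theorem~\ref{rem}, assuming $p$-adic Schanuel) and equality only when $M$ is totally real (Proposition~\ref{real}) or CM (Theorem~\ref{cm}, assuming both Schanuel conjectures). So you should not expect your argument for the full equality to go through without an additional hypothesis.

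For the $\geq$ direction, your approach differs from the paper's and has a gap. The trace-over-$D_{\p_0}$ argument from Proposition~\ref{equivV} applies only when $S_p$ arises from a set $\Gamma$ of primes, because it relies on $D_{\p_0}$ permuting the relevant set $H$; for an arbitrary $S_p\subset E_p$ (which is what the conjecture allows) that structure need not be present. The paper instead argues via determinant polynomials: it writes all $\mathfrak{l}\times\mathfrak{l}$ minors as polynomials $P_k$ in the matrix entries, uses $p$-adic Schanuel to conclude that a $\Q$-basis of the span of the $\log_p(\sigma_i g_j\e)$ is algebraically independent, applies Lemma~\ref{poly} to force each $P_k$ into the ideal generated by the corresponding $\Q$-linear relations, exponentiates those relations, transports them through $\psi$ to $\C$, and then takes $\log|\cdot|$ to see that the same $P_k$ vanish on the real matrix. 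This never passes through ``$\beta$ trivial in $M_\p$'' and avoids the decomposition-group issue entirely.

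For the $\leq$ direction, the gap is more fundamental and is exactly why the paper leaves the general case open. Your plan to ``choose $\psi$ so that $S_\psi$ meets each complex-conjugate pair at most once'' is impossible whenever $|S_p|>n/2$, by pigeonhole. More seriously, the step ``every $\R$-linear column dependency on the real side reduces to a multiplicative identity in $M^\times$'' fails in general: a relation $\sum a_{ls}\log|\tau_{i(s)}g_{j(s)}\e|=0$ only says that a certain product of conjugates of $\e$ has complex absolute value $1$, and for a non-CM field there is no reason for such an element to be a root of unity. This is precisely what Lemma~\ref{rtun} supplies in the CM case (using that $\O_K^*/\O_{K^+}^*$ is finite), and it is the indispensable ingredient in the paper's proof of Theorem~\ref{cm}. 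Without a substitute for that lemma, classical Schanuel by itself does not let you pass from $\log|\cdot|$-relations back to genuine multiplicative identities, and the $\leq$ inequality remains open.
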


When $S_p=E_p$ this conjecture is equivalent to Leopoldt's.

This paper's main result conditionally proves half of the conjecture, i.e., assuming the $p$-adic Schanuel Conjecture we prove that the left-hand side is greater than or equal to the right-hand side. Moreover we conditionally prove equality when $M$ is either totally real or CM over $\Q$.

\section{Schanuel's Conjecture}

S.~Schanuel formulated his conjecture in the 1960s. It is a generalization of several theorems that prove large classes of numbers are transcendental.
The { Hermite-Lindemann-Weirstrass Theorem} (1880s) implies that $e^\alpha$ is transcendental for $\alpha$ algebraic. The { Gel'fond-Schneider Theorem} (1934) implies that $\alpha^\beta$ is transcendental for $\alpha,\beta$ algebraic and $\beta\notin\Q$.  { Baker's Theorem} (1966) implies that $\alpha_1^{\beta_1}\cdots \alpha_n^{\beta_n}$ is transcendental for $\alpha_i,\beta_i$ algebraic, $\beta_i\notin\Q$, and $\beta_1,\ldots,\beta_n$ linearly independent over $\Q$.  Schanuel's Conjecture would imply the transcendence of many more numbers including $e+\pi,e\pi,\pi^e,e^e,\pi^\pi,\log\pi,$ and $(\log2)(\log3)$.

More generally, Schanuel's Conjecture is a result about the algebraic independence of sets of logarithms. It can be formulated for either complex or $p$-adic logarithms. 
\medskip

\noindent{\bf Schanuel's Conjecture: Logarithmic Formulation.}
{\em Let non-zero $\alpha_1, \ldots, \alpha_n$ be algebraic over $\Q$ and suppose that for any choice of the multi-valued logarithm $\log\alpha_1, \ldots, \log\alpha_n$ are linearly independent over $\Q$.  Then $\log\alpha_1, \ldots,$ $\log\alpha_n$ are algebraically independent over $\Q$.}

\medskip

\noindent {\bf $p$-adic  Schanuel Conjecture: Logarithmic Formulation.}
{\em Let non-zero $\alpha_1, \ldots, \alpha_n$ be algebraic over $\Q$ and  contained in an extension of $\Q_p$.  If $\log_p\alpha_1, \ldots, \log_p\alpha_n$ are linearly independent over $\Q$, then $\log_p\alpha_1, \ldots, \log_p\alpha_n$ are algebraically independent over $\Q$.}

\section{The Main Theorem}

The presentation in this section uses techniques proposed by M.~Waldschmidt \cite{Wald}. For the remainder of this paper we assume that
\begin{itemize}
\item {$M$ is a finite Galois extension of $\Q$ with $G=\Gal(M/\Q)$ and $|G|=n$.}
\item $\e\in \O_M^*$ is a weak Minkowski unit, i.e.,  $\{g_1\e,\ldots,g_n\e\}$ generates a finite index subgroup of $\O_M^*$. Moreover $|\sigma_i g_j \e-1|_p<1$ for all $\sigma_i\in E_p$ and $g_j\in G$.
\item $p$ is any fixed prime in $\Z$.
\item $S_p$ is a fixed subset of $E_p$.
\item $\psi$  denotes an isomorphism  from $\C_p$ to $\C$.
\item  $S_\psi:=\{\psi.\sigma\,|\,\sigma\in S_p\}\subset E$.

 \end{itemize}

\begin{thm}\label{rem} 
Let  $M$ be a finite Galois extension of $\Q$, with  $g_j\in G=\Gal(M/\Q)$, $\sigma_i\in E_p:=\Emb(M,\C_p)$, and $\tau_i\in E:=\Emb(M,\C)$.  Let
 $\e\in \O_M^*$ be a weak Minkowski unit such that $|\sigma_i g_j \e-1|_p<1$ for all $i, j$.
 Let $S_p$  and  $S_\psi$ be  as defined above. Assume the $p$-adic Schanuel Conjecture. Then 
\[\rk(\log_p(\sigma_ig_j\e))_{\sigma_i\in S_p,\,g_j\in G}\geq\max_\psi\{\rk(\log|\tau_ig_j\e|)_{\tau_i\in S_\psi,\,g_j\in G}\}.\]
\end{thm}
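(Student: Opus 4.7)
The plan is to realize both the $p$-adic and real matrices as specializations of a single pencil with rational coefficient matrices, and to use the $p$-adic Schanuel Conjecture to show that the $p$-adic specialization already achieves the generic rank of the pencil (which automatically dominates any other specialization).

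Fix $\psi$ realizing the maximum on the right-hand side and set $\tau_i := \psi.\sigma_i$ for $\sigma_i \in S_p$.  Replacing the columns indexed by $g_j \in G$ with columns indexed by a fundamental system of units $\epsilon_1,\ldots,\epsilon_r$ of $\O_M^*$ does not change the ranks on either side: the weak Minkowski property of $\e$ guarantees that $\{g_j \e\}$ and $\{\epsilon_k\}$ generate the same $\Q$-subspace of $\O_M^*\otimes_\Z\Q$, so the two column systems have the same $\C_p$- (resp.\ $\R$-) span.  Set $B_p := (\log_p(\sigma_i \epsilon_k))$ and $B_\R := (\log|\tau_i \epsilon_k|)$; the goal becomes $\rk(B_p) \geq \rk(B_\R)$.

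The key input is the descent of $\Q$-linear relations.  If $q_{ik} \in \Q$ satisfy $\sum_{i,k} q_{ik} \log_p(\sigma_i \epsilon_k) = 0$, clearing denominators by a common multiplier $N$ and using that $\log_p$ is a homomorphism shows that the algebraic unit $\alpha := \prod_{i,k} \sigma_i(\epsilon_k)^{N q_{ik}}$ has $\log_p(\alpha)=0$, so $\alpha$ is a root of unity in $\C_p$.  Because $\psi$ is a field isomorphism, $\psi(\alpha) = \prod_{i,k} \tau_i(\epsilon_k)^{N q_{ik}}$ is a root of unity in $\C$, hence has complex absolute value $1$, and $\sum_{i,k} q_{ik} \log|\tau_i \epsilon_k| = 0$.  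Consequently the rule $\log_p(\sigma_i \epsilon_k) \mapsto \log|\tau_i \epsilon_k|$ defines a surjective $\Q$-linear map $\phi : W_p \twoheadrightarrow W_\R$ between the $\Q$-spans of the entries in $\C_p$ and in $\R$.  Choose a $\Q$-basis $e_1,\ldots,e_m$ of $W_p$, set $f_l := \phi(e_l)$, and write $\log_p(\sigma_i \epsilon_k) = \sum_l q_{ik}^l e_l$ with $q_{ik}^l \in \Q$; applying $\phi$ gives $\log|\tau_i \epsilon_k| = \sum_l q_{ik}^l f_l$, so
\[
B_p \;=\; \sum_{l=1}^{m} e_l\, Q^{(l)}, \qquad B_\R \;=\; \sum_{l=1}^{m} f_l\, Q^{(l)},
\]
with the \emph{same} rational matrices $Q^{(l)} := (q_{ik}^l)$.

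Now $p$-adic Schanuel enters: each $e_l$ is a rational multiple of $\log_p(\alpha_l)$ for some algebraic $\alpha_l \in \overline{\Q}^*$, and the $\log_p(\alpha_l)$ are $\Q$-linearly independent, so they are algebraically independent over $\Q$.  Hence $\Q(e_1,\ldots,e_m) \cong \Q(X_1,\ldots,X_m)$ as fields, and $\rk(B_p)$ computed over $\C_p$ equals the generic rank over $\Q(X_l)$ of the pencil $B(X) := \sum_l X_l Q^{(l)}$.  The real matrix $B_\R$ is exactly the specialization $B(f)$ of this pencil at $X_l = f_l \in \R$, and the rank of a specialization never exceeds the generic rank, so $\rk(B_\R) \leq \rk(B_p)$, as required.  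The main obstacle is the identification of $\rk(B_p)$ with the generic rank of the pencil: this step is precisely what demands Schanuel, and without an analogous real statement for the $f_l$ one cannot symmetrically evaluate $\rk(B_\R)$, which is why the method yields only one inequality rather than the expected equality.
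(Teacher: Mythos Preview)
Your argument is correct and follows essentially the same route as the paper: both transfer the $\Q$-linear relations among the $p$-adic logarithms to the archimedean logarithms (via $\exp_p$, the isomorphism $\psi$, and the complex modulus, using that a global unit in the kernel of $\log_p$ must be a root of unity), and then invoke the $p$-adic Schanuel Conjecture to conclude that a $\Q$-basis of the $p$-adic entries is algebraically independent, so that the $p$-adic matrix realizes the generic rank of the common rational pencil and therefore dominates every specialization. Your pencil-and-specialization packaging is somewhat cleaner than the paper's formulation through minor-determinant polynomials and the ideal-membership Lemma~\ref{poly}, and your preliminary passage to a fundamental system of units is harmless but unnecessary, yet the mathematical content of the two proofs is identical.
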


\begin{cor}
Let $\Gamma$, $\Delta_\Gamma$, $Y$, and $S_p$ be as defined in Section \ref{var}. Keep the notations and assumptions of the theorem. Then
\[\rk_{\Z_p}(\overline{\Delta_\Gamma(Y)})\geq\max_\psi\{\rk(\log|\tau_ig_j\e|)_{\tau_i\in S_\psi,\,g_j\in G}\}.\]
\end{cor}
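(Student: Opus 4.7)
The plan is to observe that this corollary is an immediate consequence of chaining Proposition \ref{equivV} with Theorem \ref{rem}; essentially no new content is needed beyond noting that the hypotheses line up.

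First, I would invoke Proposition \ref{unit} to fix a weak Minkowski unit $\e\in\O_M^*$ with the strong property that $|\sigma_i g_j\e-1|_p<1$ for every $\sigma_i\in E_p$ and every $g_j\in G$. This is exactly the running assumption of Theorem \ref{rem}, and it is also what is needed to apply Proposition \ref{equivV}, so the same $\e$ can serve in both places.

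Next, I would apply Proposition \ref{equivV} to translate the topological/group-theoretic quantity $\rk_{\Z_p}(\overline{\Delta_\Gamma(Y)})$ into the matrix rank
\[\rk_{\Z_p}(\overline{\Delta_\Gamma(Y)})=\rk\bigl(\log_p(\sigma_i g_j\e)\bigr)_{\sigma_i\in S_p,\,g_j\in G},\]
using the compatibility between $\Gamma$ and $S_p$ built in to the definition $S_p=\{\sigma_0 g\mid g^{-1}\p_0\in\Gamma\}$. Then Theorem \ref{rem}, which has the $p$-adic Schanuel Conjecture built in to its hypotheses, bounds the right-hand side below by
\[\max_\psi\bigl\{\rk(\log|\tau_i g_j\e|)_{\tau_i\in S_\psi,\,g_j\in G}\bigr\}.\]
Concatenating the equality from Proposition \ref{equivV} with the inequality from Theorem \ref{rem} yields precisely the claim of the corollary.

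There is really no obstacle here, since the real work has already been absorbed into the preceding proposition and theorem; the corollary is simply a repackaging of Theorem \ref{rem} into the language of the diagonal embedding $\Delta_\Gamma$ that motivated the variation in Section \ref{var}. The only mild subtlety to flag is that $\rk_{\Z_p}(\overline{\Delta_\Gamma(Y)})$ is intrinsically an invariant of $M$ and $\Gamma$, whereas both matrix ranks depend on the chosen weak Minkowski unit $\e$; Proposition \ref{equivV} shows that the matrix rank on the $p$-adic side is in fact independent of the choice of such $\e$, so the inequality extracted is well-posed.
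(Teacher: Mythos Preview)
Your proposal is correct and matches the paper's approach: the paper states this corollary without proof, since it is an immediate consequence of combining Proposition~\ref{equivV} with Theorem~\ref{rem}, exactly as you describe.
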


We will need the following algebraic lemma. For its proof see \cite{Wald}.

\begin{lem}\label{poly}
Let $K$ be a field, $A_1,\ldots,A_t$ be elements in $K[T_1,\ldots,T_m]$, and $P\in K[T_1,\ldots,T_m,T_{m+1},\ldots,T_{m+t}]$. If the polynomial $P(T_1,\ldots,T_m,A_1,\ldots,A_t)$ is the zero polynomial in $K[T_1,\ldots,T_m]$, then $P$ is an element of the ideal of $K[T_1,\ldots,T_m,T_{m+1},\ldots,T_{m+t}]$ generated by the polynomials $\{T_{m+l}-A_l\}_{1\leq l\leq t}$.
\end{lem}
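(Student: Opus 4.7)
The plan is to recognize this as a standard fact about the kernel of a substitution homomorphism, and to prove it via iterated one-variable division. Concretely, let $R := K[T_1,\ldots,T_m,T_{m+1},\ldots,T_{m+t}]$ and consider the evaluation map $\phi\colon R \to K[T_1,\ldots,T_m]$ which is the identity on $T_1,\ldots,T_m$ and sends $T_{m+l}$ to $A_l$ for each $l$. The hypothesis $P(T_1,\ldots,T_m,A_1,\ldots,A_t)=0$ is exactly the statement $P\in\ker\phi$, while the conclusion is $P\in I$, where $I$ is the ideal generated by the elements $T_{m+l}-A_l$. Since $\phi(T_{m+l}-A_l)=A_l-A_l=0$, the containment $I\subseteq\ker\phi$ is immediate, and the whole content of the lemma is the reverse inclusion $\ker\phi\subseteq I$.

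To show $\ker\phi\subseteq I$, I would use the Euclidean division algorithm one variable at a time, exploiting the crucial fact that each $T_{m+l}-A_l$ is \emph{monic of degree one} in $T_{m+l}$ when the remaining variables are treated as coefficients. First I would view $P$ as a polynomial in $T_{m+t}$ with coefficients in $K[T_1,\ldots,T_{m+t-1}]$ and divide by $T_{m+t}-A_t$ to write
\[
P \;=\; Q_t\,(T_{m+t}-A_t) + R_t,
\]
where $\deg_{T_{m+t}}(R_t)<1$, i.e.\ $R_t\in K[T_1,\ldots,T_{m+t-1}]$. Next, in the smaller polynomial ring, I would divide $R_t$ by $T_{m+t-1}-A_{t-1}$, and iterate. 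After $t$ steps this produces
\[
P \;=\; \sum_{l=1}^{t} Q_l\,(T_{m+l}-A_l) + R,
\]
with $R\in K[T_1,\ldots,T_m]$.

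Finally, applying $\phi$ to both sides makes every summand $Q_l(T_{m+l}-A_l)$ vanish, so $\phi(P)=R(T_1,\ldots,T_m)$. By the hypothesis this equals zero in $K[T_1,\ldots,T_m]$, and since $R$ involves only the variables $T_1,\ldots,T_m$, we conclude $R=0$ as an element of $R$. Hence $P=\sum_{l} Q_l (T_{m+l}-A_l)\in I$, as required. The only point that deserves care---and what I expect to be the main obstacle---is keeping the rings straight during the iterated division: one must justify that each division step takes place in a polynomial ring over a subring in which $T_{m+l}-A_l$ really is monic in $T_{m+l}$, so that the remainder genuinely loses the variable $T_{m+l}$ and the process terminates with $R\in K[T_1,\ldots,T_m]$. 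An alternative packaging is a clean induction on $t$ (base case $t=1$ is the standard one-variable division, inductive step dividing out $T_{m+t}-A_t$ first and applying the induction hypothesis to $R_t$ in the smaller polynomial ring over $K[T_1,\ldots,T_m]$), which avoids having to bookkeep all $t$ divisions at once.
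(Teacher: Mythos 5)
Your argument is correct and complete. Note that the paper itself gives no proof of this lemma at all: it simply cites Waldschmidt \cite{Wald}, so there is no in-paper argument to compare against. What you supply is the standard self-contained proof: the ideal $I=(T_{m+1}-A_1,\ldots,T_{m+t}-A_t)$ is obviously contained in the kernel of the substitution homomorphism $\phi$, and the reverse inclusion follows by successively dividing by the $T_{m+l}-A_l$, which are monic of degree one in $T_{m+l}$ over the coefficient ring $K[T_1,\ldots,T_{m+l-1}]$ (and monic division is valid over an arbitrary commutative ring, which is exactly the point you flag as needing care --- the coefficient ring here is not a field). Applying $\phi$ to the resulting decomposition kills each term $Q_l\,(T_{m+l}-A_l)$ and fixes the remainder, which lies in $K[T_1,\ldots,T_m]$, so the hypothesis forces the remainder to vanish and $P\in I$. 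Two cosmetic remarks: you use the letter $R$ both for the ambient polynomial ring and for the final remainder, which should be disentangled; and your closing "alternative packaging" as an induction on $t$ is indeed the cleanest way to write the bookkeeping, with the base case $t=1$ being ordinary division by a monic linear polynomial. Either way, your proof is a legitimate replacement for the external citation.
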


\begin{proof}[Proof of Theorem \ref{rem}.]
Set $\mathfrak{R}=(\log_p(\sigma_ig_j\e))_{\sigma_i\in S_p,\,g_j\in G}$ and $\mathfrak{r}=\rk(\mathfrak{R})$. Also for a fixed $\psi$ set $\mathfrak{l}=\rk(\log|\tau_ig_j\e|)_{\tau_i\in S_\psi,\,g_j\in G}$. For contradiction, assume that $\mathfrak{r}<\mathfrak{l}$.

Express the determinants of all $\mathfrak{l}\times \mathfrak{l}$ minors of a $|G|\times |S_p|$ matrix as polynomials in $|G|\,|S_p|=:v$ indeterminates: $P_k(T)\in\Q[T]$ (where $T=T_1, \ldots, T_{v}$). Since $\mathfrak{r}<\mathfrak{l}$,  when we evaluate the determinants for $\mathfrak{R}$ we have  $P_k(\log_p(\sigma_ig_j\e))=0$.

Let $F$ be the $\Q$-subvector space of $\C_p$ generated by the  entries of $\mathfrak{R}$ and define $m=\dim_\Q(F)$. Number the $v$ entries of the matrix by mapping $l\in\{1,\ldots,v\}$ to the entry $\log_p(\sigma_{i(l)}g_{j(l)}\e)$ so that the first $m$ entries $\left\{\log_p(\sigma_{i(s)}g_{j(s)}\e)\right\}_{1\leq s\leq m}$ are a basis of $F$. By hypothesis the $\{\sigma_{i(s)}g_{j(s)}\e\}_{1\leq s\leq m}$ are algebraic over $\Q$ and are in an extension of $\Q_p$. By choice the \linebreak $\{\log_p(\sigma_{i(s)}g_{j(s)}\e)\}_{1\leq s\leq m}$ are linearly independent over $\Q$. Thus the $p$-adic Schanuel  Conjecture implies that the $\{\log_p(\sigma_{i(s)}g_{j(s)}\e)\}_{1\leq s\leq m}$ are algebraically independent over $\Q$.

Let $a_{l},a_{ls}\in\Z$ with $a_{l}>0$ be such that for all $l$ (with  $1\leq l\leq v$)
\begin{equation}\label{sumlog2}
a_{l}\log_p(\sigma_{i(l)}g_{j(l)}\e)=\sum_{s=1}^m a_{ls}\log_p(\sigma_{i(s)}g_{j(s)}\e).
\end{equation}
Number the indeterminates so that $T_1,\ldots,T_m$ correspond to the basis elements of $F$.  
Using Equation (\ref{sumlog2}), define $A_l\in\Q[T_1,\ldots,T_m]$ as 
\[A_l=\sum_{s=1}^m \frac{a_{ls}}{a_l}T_s.\]
Now consider $P_k'(T_1,\ldots,T_m)=P_k(T_1,\ldots,T_m,A_{m+1},\ldots,A_{v})\in\Q[T_1,\ldots,T_m]$. We still have $P_k'\left(\log_p(\sigma_{i(s)}g_{j(s)}\e)\right)=0.$ But since the $\{\log_p(\sigma_{i(s)}g_{j(s)}\e)\}_{1\leq s\leq m}$ are algebraically independent over $\Q$,  $P_k'$ must be identically $0$ for all $k$. Thus Lemma \ref{poly} implies that, for all $k$,  $P_k$ is in the ideal generated by $\{T_l-A_l\}_{1\leq l\leq v}$, i.e., in the ideal generated by $\{T_l-\sum_{s=1}^m \frac{a_{ls}}{a_l}T_s\}_{1\leq l\leq v}$.

Let $a_{l},a_{ls}$ be as in Equation (\ref{sumlog2}). For each $l$ there exists $b\in\Z_{>0}$ such that for all $s$,
$ ba_{ls}\log_p(\sigma_{i(s)}g_{j(s)}\e)$ and $ba_l\log_p(\sigma_{i(l)}g_{j(l)}\e)$ are in the region of $\C_p$ for which $\exp_p$ is the inverse of $\log_p$.
 Thus, after multiplying both sides of Equation (\ref{sumlog2}) by $b$, we can apply the $p$-adic exponential function and
\[(\sigma_{i(l)}g_{j(l)}\e)^{a_l b}=\prod_{s=1}^m (\sigma_{i(s)}g_{j(s)}\e)^{a_{ls}b}.\]
For an isomorphism $\psi:\C_p\rightarrow\C$, Lemma \ref{bij} gives us a bijection between the elements of $E_p$ and $E$: 
\[\xymatrix{\tau_i:M\ar^{\sigma_i}[r]&\C_p\ar^{\psi}_{\cong}[r]&\C}. \]
 So
\begin{align*}
\psi(\sigma_{i(l)}g_{j(l)}\e)^{a_l b}&=\prod_{s=1}^m \psi(\sigma_{i(s)}g_{j(s)}\e)^{a_{ls}b}\\
(\tau_{i(l)}g_{j(l)}\e)^{a_l b}&=\prod_{s=1}^m (\tau_{i(s)}g_{j(s)}\e)^{a_{ls}b}.
\end{align*}
Next apply the usual complex modulus to both sides:
\[|\tau_{i(l)}g_{j(l)}\e|^{a_l b}=\prod_{s=1}^m |\tau_{i(s)}g_{j(s)}\e|^{a_{ls}b}.\]
Then apply the real logarithm  to both sides:
\begin{equation}\label{cx2}
a_l\log\left|\tau_{i(l)}g_{j(l)}\e\right|=\sum_{s=1}^m a_{ls}\log\left|\tau_{i(s)}g_{j(s)}\e\right|.\end{equation}

   Equation (\ref{cx2}) combined with the fact that all $P_k$ are in the ideal  generated by $\{T_l-\sum_{s=1}^m \frac{a_{ls}}{a_l}T_s\}_{1\leq l\leq v}$ implies that for all $k$, $$P_k(\log\left|\tau_{i(1)}g_{j(1)}\e\right|,\ldots,\log\left|\tau_{i(v)}g_{j(v)}\e\right|)=0.$$ But $\rk \left(\log|\tau_ig_j\e|\right)_{\tau_i\in S_\psi,\,g_j\in G}=\mathfrak{l}$. Thus the determinant of at least one $\mathfrak{l}\times \mathfrak{l}$ minor is non-zero. Hence we have a contradiction.
   
   So for every $\psi$ we have $\rk\mathfrak{R}\geq \rk(\log|\tau_ig_j\e|)_{\tau_i\in S_\psi,\,g_j\in G}$.    Despite the existence of infinitely many isomorphisms between $\C_p$ and $\C$, there are only finitely many subsets $S_\psi$ of the finite set $E$. Thus $\rk\mathfrak{R}\geq\max_\psi\{\rk(\log|\tau_ig_j\e|)_{\tau_i\in S_\psi,\,g_j\in G}\}.$
   \end{proof}

\begin{thm}\label{SchL} For a finite Galois extension of $\Q$, the $p$-adic Schanuel Conjecture implies Leopoldt's Conjecture.
\end{thm}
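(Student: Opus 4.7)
The plan is to specialize Theorem \ref{rem} to the case $S_p = E_p$ and then supply the (unconditional) reverse inequality. When $S_p = E_p$, the bijection of Lemma \ref{bij} gives $S_\psi = E$ for \emph{every} isomorphism $\psi:\C_p\to\C$, and by Corollary \ref{r},
\[\rk(\log|\tau_i g_j\e|)_{\tau_i\in E,\,g_j\in G} = \rk_\Z \O_M^*.\]
Theorem \ref{rem} then yields
\[\rk(\log_p(\sigma_i g_j\e))_{\sigma_i\in E_p,\,g_j\in G} \geq \rk_\Z \O_M^*.\]

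For the matching upper bound, I would argue directly, without invoking Schanuel. Set $r := \rk_\Z \O_M^*$. Since $\e$ is a weak Minkowski unit, the set $\{g_1\e,\dots,g_n\e\}$ generates a subgroup of $\O_M^*$ of finite index, hence of $\Z$-rank $r$. Any $r+1$ of these elements therefore satisfy a nontrivial relation $\prod_j (g_j\e)^{m_j} = \zeta$ with $\zeta$ a root of unity and $m_j\in\Z$ not all zero. Applying any $\sigma_i\in E_p$ and then $\log_p$ (which annihilates roots of unity) produces the linear dependence $\sum_j m_j \log_p(\sigma_i g_j\e) = 0$ among the corresponding $r+1$ columns of the matrix. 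Hence its rank is at most $r$.

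Combining the two bounds gives
\[\rk(\log_p(\sigma_i g_j\e))_{\sigma_i\in E_p,\,g_j\in G} = \rk_\Z \O_M^*,\]
which is the precise form of Leopoldt's Conjecture stated in Conjecture \ref{mat}, equivalently Conjecture \ref{DX}. The main content has already been shouldered by Theorem \ref{rem}; at this stage Theorem \ref{SchL} is essentially a corollary, and the only delicate point is the bookkeeping that identifies $S_\psi$ with the full set $E$ when $S_p = E_p$, which is immediate from Lemma \ref{bij}.
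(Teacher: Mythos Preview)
Your proof is correct. The first half --- specializing Theorem \ref{rem} to $S_p=E_p$, observing $S_\psi=E$ via Lemma \ref{bij}, and invoking Corollary \ref{r} --- matches the paper exactly. For the reverse inequality the paper takes a slightly different route: it passes through the diagonal-embedding formulation, noting that $X$ has finite index in $\O_M^*$ so $\rk_\Z X = \rk_\Z \O_M^*$, that $\rk_\Z X \geq \rk_{\Z_p}\overline{\Delta X}$ because $\Z\subset\Z_p$, and then invokes Proposition \ref{equivV} to identify $\rk_{\Z_p}\overline{\Delta X}$ with the matrix rank. Your column-dependence argument (any $r+1$ of the $g_j\e$ satisfy a multiplicative relation up to a root of unity, then apply $\sigma_i$ and $\log_p$) is more direct and never leaves the matrix, whereas the paper's detour has the side benefit of exhibiting the equivalence of Conjectures \ref{DX} and \ref{mat} in passing. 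Both are valid; yours is marginally more elementary.
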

   
   \begin{proof}
   For Leopoldt's Conjecture, $S_p=E_p$ and so for all $\psi$, $S_\psi=E$. So Theorem \ref{rem} implies 
   \[\rk(\log_p(\sigma_ig_j\e))_{\sigma_i\in E_p,\,g_j\in G}\geq\rk(\log|\tau_ig_j\e|)_{\tau_i\in E,\,g_j\in G}.\]

   We prove the reverse inequality directly.    
By Corollary \ref{r}, $\rk_\Z(\O_M^*)= \rk(\log|\tau_ig_j\e|)_{\tau_i\in E,\,g_j\in G}$.   Let $X$ and $\Delta$ be as defined in Section \ref{2ways}. $X$ is a finite index subgroup of $\O_M^*$ and so $\rk_\Z(X) = \rk_\Z(\O_M^*)$. Since $\Z\subset\Z_p$,  $\rk_\Z(X)\geq\rk_{\Z_p}(\overline{\Delta X})$. Proposition \ref{equivV} implies \[\rk_{\Z_p}(\overline{\Delta X})=\rk(\log_p(\sigma_ig_j\e))_{\sigma_i\in E_p,\,g_j\in G}.\] Together these inequalities and equalities imply   
 \[\rk(\log|\tau_ig_j\e|)_{\tau_i\in E,\,g_j\in G}\geq\rk(\log_p(\sigma_ig_j\e))_{\sigma_i\in E_p,\,g_j\in G}.\]
   \end{proof}

   \section{Computing Rank}\label{calc}

In the following subsections we will compute a lower bound for \linebreak $\rk(\log_p(\sigma_ig_j\e))_{\sigma_i\in S_p,\,g_j\in G}$ and in some cases we will compute  it exactly. We do this by looking more closely at $S_p$, or more precisely at the removed rows $E_p\smallsetminus S_p$. 
   
Fix $p$ and $S_p$. Let $$\mathfrak{R}=(\log_p(\sigma_ig_j\e))_{\sigma_i\in S_p,\,g_j\in G}.$$ Let $r$ be the $\Z$-rank of the global units, i.e.,  $r= \rk(\log|\tau_ig_j\e|)_{\tau_i\in E,\,g_j\in G}$.  For $x\in\R$, define 
  \[x^+=\begin{cases}
    x  & x\geq0, \\
   0   & x<0.
\end{cases}\]
   
   \subsection{Real Extensions}

  \begin{prop}\label{real}
Let $M$ be a totally real Galois extension of $\Q$. Let $t=|E_p \smallsetminus S_p|$. Assume the $p$-adic Schanuel Conjecture. Then \[\rk(\log_p(\sigma_ig_j\e))_{\sigma_i\in S_p,\,g_j\in G}=r-(t-1)^+.\]
  \end{prop}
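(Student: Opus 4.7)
My approach is to sandwich $\rk \mathfrak{R}$ between a lower bound from Theorem~\ref{rem} and the trivial upper bound given by the row count $|S_p|$. The case $t=0$ is exactly Leopoldt's Conjecture and reduces immediately to Theorem~\ref{SchL}, so from here on I assume $t \geq 1$.

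For the lower bound, I pick any isomorphism $\psi:\C_p\to\C$ and invoke Theorem~\ref{rem}:
\[\rk \mathfrak{R} \;\geq\; \rk\bigl(\log|\tau_i g_j \e|\bigr)_{\tau_i\in S_\psi,\,g_j\in G}.\]
The real matrix on the right has $n-t$ rows, so to prove the lower bound it suffices to show these rows are linearly independent. By Corollary~\ref{r}, the full $n\times n$ real log matrix $(\log|\tau_i g_j \e|)_{\tau_i\in E,\,g_j\in G}$ has rank $r=n-1$, so its left-nullspace is one-dimensional. The all-ones vector lies in this nullspace, since for each fixed $j$ the column sum
\[\sum_{\tau_i\in E}\log|\tau_i(g_j\e)| \;=\; \log|\Nm_{M/\Q}(g_j\e)|\]
vanishes ($g_j\e$ is a unit). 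Every element of the nullspace is therefore a scalar multiple of $(1,\ldots,1)$, and no non-zero such multiple vanishes on the $t\geq 1$ coordinates indexed by $E\smallsetminus S_\psi$. Hence no non-trivial dependence survives among the $n-t$ remaining rows, and their rank is exactly $n-t$.

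The upper bound $\rk\mathfrak{R}\leq|S_p|=n-t$ is immediate. Combining yields $\rk\mathfrak{R}=n-t=(n-1)-(t-1)=r-(t-1)^+$ for every $t\geq 1$, matching the formula. The only substantive step is the left-nullspace computation above; it is clean in the totally real setting precisely because the product formula for the norm supplies the unique row relation. This is exactly what fails for complex non-CM fields, which is why those cases require a separate treatment (and give only an inequality), and also why the CM case needs its own argument to account for the extra complex-conjugation relations.
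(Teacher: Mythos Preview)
Your proof is correct and follows essentially the same strategy as the paper: the upper bound is the trivial row count, and the lower bound comes from Theorem~\ref{rem} once you know $\rk(\log|\tau_ig_j\e|)_{\tau_i\in S_\psi,\,g_j\in G}=n-t$. The paper establishes that last rank by appealing to the invertible $r\times r$ submatrix exhibited in Proposition~\ref{unit} and Corollary~\ref{r}, whereas your left-nullspace argument (the unique row relation is the all-ones vector coming from $\log|\Nm_{M/\Q}(g_j\e)|=0$) is an equally valid and arguably cleaner way to see that any $n-t$ rows remain independent when $t\geq 1$.
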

  
  \begin{proof}
  If $t=0$, then we are in the case of Leopoldt's Conjecture and $\rk\mathfrak{R}=r$ and the formula holds. So we now assume $t>0$.
  
  Clearly, for all $\psi$, $|E \smallsetminus S_\psi|=|E_p \smallsetminus S_p|=t$.
  
 We showed in the proof of Proposition \ref{unit} and in Corollary \ref{r} that the $r\times r$ matrix $(\log|\tau_ig_j\e|)_{i,j=1,\ldots,r}$  and the $n\times n$ matrix $(\log|\tau_ig_j\e|)_{\tau_i\in E,\,g_j\in G}$ both have  rank equalling $r$.
Recall that $r=n-1$ for totally real Galois extensions. Thus for all $\psi$ 
 \[\rk(\log|\tau_ig_j\e|)_{\tau_i\in S_\psi,\,g_j\in G}=(r+1)-t=r-(t-1).\]
Hence Theorem \ref{rem} implies $\rk\mathfrak{R}\geq r-(t-1)$.
But since $\mathfrak{R}$ has $n-t=(r+1)-t=r-(t-1)$ rows, $\rk\mathfrak{R}\leq r-(t-1)$. The proposition follows.
  \end{proof}
  
  \begin{cor}
Let $\Gamma$, $\Delta_\Gamma$, and $Y$ be as defined in Section \ref{var}. Keep the notations and assumptions of the proposition. Then
\[\rk_{\Z_p}(\overline{\Delta_\Gamma(Y)})=r-(t-1)^+.\]
\end{cor}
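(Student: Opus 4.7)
The plan is to chain together two results already established in the paper. Proposition \ref{equivV} identifies the $\Z_p$-rank of the topological closure $\overline{\Delta_\Gamma(Y)}$ with the rank of the $p$-adic logarithm matrix indexed by $S_p$:
\[\rk_{\Z_p}\bigl(\overline{\Delta_\Gamma(Y)}\bigr)=\rk\bigl(\log_p(\sigma_i g_j\e)\bigr)_{\sigma_i\in S_p,\,g_j\in G}.\]
This is valid as long as we have a weak Minkowski unit $\e$ with $|\sigma_i g_j\e-1|_p<1$ for all $i,j$, which is guaranteed by Proposition \ref{unit} and is part of the standing assumptions carried into Section \ref{calc}.

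First I would simply invoke Proposition \ref{equivV} to rewrite $\rk_{\Z_p}(\overline{\Delta_\Gamma(Y)})$ as the rank of the matrix $\mathfrak{R}=(\log_p(\sigma_i g_j\e))_{\sigma_i\in S_p,\,g_j\in G}$. Then I would apply Proposition \ref{real} (which assumes the $p$-adic Schanuel Conjecture and that $M$ is totally real), yielding $\rk\mathfrak{R}=r-(t-1)^+$. Concatenating the two equalities gives the desired identity. Since both cited results are already proved, no further work is required and no genuine obstacle arises; the role of the corollary is merely to translate the matrix-rank statement of Proposition \ref{real} back into the original language of the diagonal embedding into principal local units introduced in Section \ref{var}.
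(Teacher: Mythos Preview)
Your proposal is correct and matches the paper's approach: the corollary is stated without proof in the paper precisely because it follows immediately by combining Proposition~\ref{equivV} with Proposition~\ref{real}, exactly as you outline.
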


   \subsection{Complex Extensions}
Let $M$ be a {complex} Galois extension of $\Q$. Recall that we have fixed $S_p\subset E_p$ and for each isomorphism $\psi:\C_p\rightarrow\C$ there is a corresponding $S_\psi\subset E$. Throughout $\e$ is a weak Minkowski unit such that $|\sigma_i g_j \e-1|_p<1$ for all $g_j\in G$ and $\sigma_i\in E_p$.

    \begin{prop}\label{L}
The rank of \[\mathfrak{L}_\psi:=(\log|\tau_ig_j\e|)_{\tau_i\in S_\psi,\,g_j\in G}\]
is determined by the number of pairs of complex conjugate embeddings in $E\smallsetminus S_\psi$. In particular, if $v_\psi$ is the number of pairs of complex conjugate embeddings in $E\smallsetminus S_\psi$ then $\rk\mathfrak{L}_\psi=r-(v_\psi-1)^+$.
 \end{prop}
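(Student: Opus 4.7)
The key observation is that for any complex embedding $\tau \in E$ and any $x \in M$ one has $|\tau(x)| = |\overline{\tau}(x)|$, and hence $\log|\tau(x)| = \log|\overline{\tau}(x)|$. Consequently, in the matrix $\mathfrak{L}_\psi$ the row indexed by $\tau_i$ is identical to the row indexed by $\overline{\tau_i}$ whenever both belong to $S_\psi$. So $\mathfrak{L}_\psi$ is effectively determined by a set of at most $(r+1) - v_\psi$ distinct rows, one for each complex conjugate pair $\{\tau, \overline{\tau}\}$ having at least one representative in $S_\psi$ (recall that $M$ is complex, so there are $r+1 = n/2$ conjugate pairs in $E$). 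This immediately gives the upper bound $\rk \mathfrak{L}_\psi \leq (r+1) - v_\psi$.

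Next I would locate the unique linear dependence among the $r+1$ pair-representative rows. For each $g_j$, the element $g_j \e$ is a global unit, so $|\Nm_{M/\Q}(g_j\e)| = 1$, which translates to $\sum_{\tau \in E} \log|\tau(g_j\e)| = 0$. Grouping the embeddings into conjugate pairs and using $\log|\tau| = \log|\overline{\tau}|$, this becomes $\sum_{i=1}^{r+1} R_i = 0$, where $R_i$ denotes the distinct pair-representative row. Because Corollary \ref{r} says the full matrix $(\log|\tau_i g_j \e|)_{\tau_i \in E,\, g_j \in G}$ has rank $r$, and there are $r+1$ distinct pair-representative rows, this sum-to-zero relation is (up to scalar) the only linear dependence among $R_1, \ldots, R_{r+1}$, and it uses every $R_i$ with nonzero coefficient.

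Now a case analysis finishes the proof. If $v_\psi = 0$, every conjugate pair has a representative in $S_\psi$, so all $r+1$ distinct rows appear in $\mathfrak{L}_\psi$ and the rank equals that of the full matrix, namely $r$; this matches $r - (v_\psi - 1)^+ = r$. If $v_\psi \geq 1$, suppose a nontrivial linear relation exists among the remaining $(r+1) - v_\psi$ distinct rows. Extending this relation by zero coefficients on the $v_\psi$ removed rows produces a nontrivial linear relation among all $r+1$ rows $R_1, \ldots, R_{r+1}$. By uniqueness it must be a nonzero scalar multiple of $\sum_i R_i$; but that relation has every coefficient nonzero, contradicting the choice of zero coefficients on the removed rows. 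Hence the remaining rows are linearly independent and $\rk \mathfrak{L}_\psi = (r+1) - v_\psi = r - (v_\psi - 1)$, again matching the formula.

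The only technical point requiring care is verifying that the norm relation really is the unique dependence, which is where Corollary \ref{r} does the work; the edge cases $v_\psi = 0$ and $v_\psi = r+1$ (if $S_\psi$ is small enough) both fall out of the formula $r - (v_\psi - 1)^+$ cleanly, so no separate handling is needed beyond distinguishing $v_\psi = 0$ from $v_\psi \geq 1$.
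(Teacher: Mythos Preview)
Your proposal is correct and follows essentially the same route as the paper: both use the identity $\log|\tau(x)|=\log|\overline{\tau}(x)|$ to reduce to the $r+1$ pair-representative rows, invoke Corollary~\ref{r} to get rank $r$ for the full set, and then argue that deleting $v_\psi$ of these rows drops the rank by exactly $(v_\psi-1)^+$. The only difference is one of presentation: the paper cites directly the invertibility of a specific $r\times r$ block from the proof of Proposition~\ref{unit}, whereas you make the linear algebra more explicit by isolating the norm relation $\sum_{i=1}^{r+1} R_i=0$ as the \emph{unique} dependence (up to scalar) and arguing that any proper subset of the $R_i$ must therefore be independent. Your version is arguably cleaner on this point, since it makes transparent why the particular choice of which $v_\psi$ pairs are removed does not matter; the paper's phrasing leans on a relabeling that is left implicit.
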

   
\begin{proof} 
If there are $v_\psi$ pairs of complex conjugate embeddings in $E\smallsetminus S_\psi$ then there are exactly $\frac{n}{2}-v_\psi$ embeddings $\{\tau_1,\ldots,\tau_{n/2-v_\psi}\}\in S_\psi$ such that $\tau_j\neq \overline{\tau_i}$ for all $i,j\in\{1,\ldots,\frac{n}{2}-v_\psi\}.$ We proved in Section \ref{nota} that  \[\rk(\log|\tau_i g_j\e|)_{i=1,\ldots,r;\,g_j\in G}=r=\frac{n}{2}-1 \] when $E$ is ordered $E=\{\tau_1,\ldots,\tau_{r+1},\overline{\tau_1},\ldots,\overline{\tau_{r+1}}\}$.  Thus, if $v_\psi>0$, the rows of $\mathfrak{L}_\psi$ that  correspond to $\tau_1,\ldots,\tau_{n/2-v_\psi}$ are linearly independent. Any other row corresponding to some $\tau_i\in S_\psi\smallsetminus\{\tau_1,\ldots,\tau_{n/2-v_\psi}\}$ is dependent via the relation $\log|\tau_i g_j\e|=\log|\overline{\tau_i}g_j\e|$. Thus  
\[\rk\mathfrak{L}_\psi=\begin{cases}
  \frac{n}{2}-1    & \text{if } v_\psi=0, \\
  \frac{n}{2}-v_\psi    & \text{if } v_\psi>0.
\end{cases} \]
So $\rk\mathfrak{L}_\psi=r-(v_\psi-1)^+$.
\end{proof}

   Fix $\sigma \in E_p$. Then we can write $E_p\smallsetminus S_p=\{\sigma g_1,\ldots,\sigma g_k\}$ for  some $g_i\in G$. So for each isomorphism $\psi$,  
   \begin{align*}
   E\smallsetminus S_\psi & = \{\psi.\sigma g_1,\ldots,\psi.\sigma g_k\}\\
   &= \{\tau_\psi g_1,\ldots,\tau_\psi g_k\},\; \tau_\psi\in E.
   \end{align*}
   On the other hand, for each $\tau \in E$ there exists an isomorphism $\psi_\tau:\C_p\rightarrow\C$ such that $\psi_\tau.\sigma=\tau$. (The proof of the existence of $\psi_\tau$ is similar to the proof of the existence of an isomorphism between $\C_p$ and $\C$.) Thus for each $\tau\in E$ there exists an isomorphism $\psi_\tau$ such that $\{\tau g_1,\ldots, \tau g_k\}=E\smallsetminus S_{\psi_\tau}$. Hence the set of sets $\{\tau g_1,\ldots, \tau g_k\}_{\tau\in E}$ equals the set of sets $\{E\smallsetminus S_\psi\}_\psi$. This is  independent of the choice of $\sigma$.
   
   Therefore, despite there being infinitely many isomorphisms $\psi$, there are exactly $n$ different $E\smallsetminus S_\psi$. Hence there are only finitely many $v_\psi$, where $v_\psi$ is the number of pairs of complex conjugate embeddings in $E\smallsetminus S_\psi$. So we can define $v=\min_\psi\{v_\psi\}$.

  \begin{cor} Let $M$ be a complex Galois extension of $\Q$.
Let $v=\min_\psi\{v_\psi\}$. Assume the $p$-adic Schanuel Conjecture. 
  Then, 
\[\rk\left(\log_p(\sigma_ig_j\e)_{\sigma_i\in S_p,\,g_j\in G}\right)
\geq r-(v-1)^+.\]
  \end{cor}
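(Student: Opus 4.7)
The plan is to simply assemble the two pieces that have already been developed: Theorem \ref{rem} (which is the content that requires the $p$-adic Schanuel Conjecture) and Proposition \ref{L} (which computes the rank of $\mathfrak{L}_\psi$ for each individual $\psi$).

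First I would invoke Theorem \ref{rem} to write
\[
\rk(\log_p(\sigma_ig_j\e))_{\sigma_i\in S_p,\,g_j\in G}\;\geq\;\max_\psi\bigl\{\rk \mathfrak{L}_\psi\bigr\},
\]
where $\mathfrak{L}_\psi = (\log|\tau_ig_j\e|)_{\tau_i\in S_\psi,\,g_j\in G}$. Next, Proposition \ref{L} tells us that $\rk \mathfrak{L}_\psi = r - (v_\psi - 1)^+$, so the right-hand side equals
\[
\max_\psi\bigl\{r - (v_\psi - 1)^+\bigr\} \;=\; r - \min_\psi\bigl\{(v_\psi - 1)^+\bigr\}.
\]

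The only substantive step is to observe that $\min$ and $(\,\cdot\,)^+$ interchange in the correct way. Since the function $x \mapsto (x-1)^+ = \max(x-1,0)$ is non-decreasing on $\mathbb{R}$, its minimum over a finite set is attained at the same point as the minimum of the argument. The discussion preceding the corollary already established that only finitely many distinct sets $E \smallsetminus S_\psi$ arise (at most $n$ of them), so the minimum over $\psi$ is genuinely attained, and
\[
\min_\psi\bigl\{(v_\psi - 1)^+\bigr\} \;=\; \bigl(\min_\psi v_\psi - 1\bigr)^+ \;=\; (v-1)^+.
\]
Combining the two displayed inequalities then yields $\rk \geq r - (v-1)^+$, as required. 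There is no real obstacle here; the corollary is just the packaging of Theorem \ref{rem} and Proposition \ref{L} into the form desired in Section \ref{calc}, with the monotonicity of $(\,\cdot\,)^+$ used to swap the $\min$ through the truncation.
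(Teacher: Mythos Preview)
Your proof is correct and follows exactly the approach the paper takes: the paper simply says the corollary is ``immediate from Proposition \ref{L} and Theorem \ref{rem},'' and you have merely spelled out the routine details (including the monotonicity of $x\mapsto (x-1)^+$ needed to pass from $\min_\psi(v_\psi-1)^+$ to $(v-1)^+$).
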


 \begin{proof}
This is immediate from Proposition \ref{L} and Theorem \ref{rem}.
\end{proof}

  This result depends on examining each $E\smallsetminus S_\psi$ to calculate $v_\psi$. We can get a better result that only requires us to consider $E_p\smallsetminus S_p$. First we need a lemma relating pairs of complex conjugate embeddings to elements of $G$ induced by complex conjugation.
   
       \begin{lem}\label{cc=c} Let $\tau\in E$ and $c\in G$ be such that $\overline{\tau}=\tau c$. Then the number of pairs of complex conjugate embeddings in $\{\tau g_1,\ldots, \tau g_k\}$ equals the number of right cosets of $\{\id,c\}$ in $\{g_1,\ldots,g_k\}$.
 \end{lem}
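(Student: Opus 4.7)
The plan is to translate the complex conjugation action on embeddings of the form $\tau g$ into a left-multiplication action of $c$ on $G$, and then recognize the complex-conjugate pairs as the right cosets $\{g, cg\}$ of $\{\id, c\}$ that sit entirely inside $\{g_1, \ldots, g_k\}$.

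First I would use Lemma \ref{com} to establish the key identity $\overline{\tau g} = \tau(cg)$ for every $g \in G$. This is a one-line unwinding: for $m \in M$,
\[\overline{\tau g}(m) = \overline{\tau(g(m))} = \overline{\tau}(g(m)) = (\tau c)(g(m)) = \tau(cg)(m).\]
Since $\tau$ is injective as a ring homomorphism $M \to \C$, this converts the geometric question ``when is $\tau g_j = \overline{\tau g_i}$?'' into the purely group-theoretic question ``when is $g_j = cg_i$?''.

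Next I would observe that $H := \{\id, c\}$ is a genuine subgroup of $G$ of order $2$: $c^2 = \id$ because complex conjugation is an involution, and $c \neq \id$ because $M$ is complex. Consequently every right coset $Hg = \{g, cg\}$ has two distinct elements. A pair of complex conjugate embeddings inside $\{\tau g_1, \ldots, \tau g_k\}$ corresponds, via the identification of the previous paragraph, to a pair $\{g_i, g_j\} \subset \{g_1, \ldots, g_k\}$ with $g_j = cg_i$, i.e., to a right coset $Hg_i$ whose two elements \emph{both} lie in $\{g_1, \ldots, g_k\}$. Distinct cosets give disjoint (hence distinct) conjugate pairs of embeddings, and conversely distinct conjugate pairs come from distinct cosets, so the two counts match.

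I do not anticipate a real obstacle here, since essentially all the content lies in the identity $\overline{\tau g} = \tau(cg)$ and the rest is bookkeeping. The only point that requires a bit of care is the reading of ``right coset of $\{\id, c\}$ in $\{g_1, \ldots, g_k\}$'': following the intended application of the lemma to counting complex conjugate pairs, this must mean a right coset $Hg$ of $H$ in $G$ whose full two-element underlying set is contained in $\{g_1, \ldots, g_k\}$, rather than merely one meeting this set.
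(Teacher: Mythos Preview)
Your proposal is correct and follows essentially the same approach as the paper: both arguments reduce to the identity $\overline{\tau g}=\tau(cg)$, then use injectivity of $\tau$ to identify a conjugate pair $\{\tau g_i,\overline{\tau g_i}\}$ inside $\{\tau g_1,\ldots,\tau g_k\}$ with a right coset $\{g_i,cg_i\}$ lying in $\{g_1,\ldots,g_k\}$. Your extra remarks that $\{\id,c\}$ really is a subgroup of order $2$ and your explicit reading of ``right coset in $\{g_1,\ldots,g_k\}$'' are useful clarifications the paper leaves implicit.
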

   
   \begin{proof}
   Let $\{g,cg\}$  be a right coset in $\{g_1,\ldots,g_k\}$. Then $\{\tau g,\tau cg\}$ are embeddings in $\{\tau g_1,\ldots, \tau g_k\}$. For all $m\in M$, 
   \begin{align*}
   \tau cg(m)&=\tau c(g(m))\\
   &=\overline{\tau(g(m))}\\
   &=\overline{\tau g(m)}
   \end{align*}
Thus $\tau g$ and $\tau cg$ are a pair of complex conjugate embeddings.
   
   Let $\tau g_j$ and $\tau g_i$ be a pair of complex conjugate embeddings in $\{\tau g_1,\ldots, \tau g_k\}$, with $g_j,g_i\in \{g_1,\ldots,g_k\}$. Then for all $m\in M$ 
   \begin{align*}
   \tau g_j(m)
   &=\overline{\tau g_i(m)}\\
   &=\overline{\tau(g_i(m))}\\
   &=\tau c(g_i(m))\\
   &=\tau c g_i(m).
   \end{align*}
  Since $\tau$ is injective,  $g_j(m)=cg_i(m)$ for all $m\in M$. Hence $g_j=cg_i$. Thus $\{g_j,g_i\}$ is a right coset of $\{\id,c\}$.
  
  Since every coset determines a unique pair of complex conjugates and vice versa, the lemma is proven.
   \end{proof}

   Pick $\sigma \in E_p$ and write $E_p\smallsetminus S_p=\{\sigma g_1,\ldots,\sigma g_k\}$ for  some $g_i\in G$. Define $J_\sigma:=\{g_1,\ldots,g_k\}$. Let $C$ be the set of  elements in $G$ induced by complex conjugation. Then for $c\in C$, define $t_c$ to be the number of right cosets of $\{\id,c\}$ in $J_\sigma$ and define  $t=\min_{c\in C}\{t_c\}$.
   
   \begin{prop}\label{horse}  Let $M$ be a complex Galois extension of $\Q$.   
 Define $v_\psi$ to be the number of pairs of complex conjugate embeddings in $E\smallsetminus S_\psi$, $v:=\min_\psi\{v_\psi\}$, $t_c$ to be the number of right cosets of $\{\id,c\}$ in $J_\sigma$, and  $t:=\min_{c\in C}\{t_c\}$.
   Then $t$ is independent of the choice of $\sigma$ and 
   $v=t.$
   \end{prop}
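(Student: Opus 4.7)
The plan is to prove the two assertions of the proposition separately, but both ultimately reduce to using Lemma \ref{com} (the conjugacy-class structure of $C$) together with the paper's earlier identification
\[
\{E\smallsetminus S_\psi\}_\psi \;=\; \bigl\{\{\tau g_1,\ldots,\tau g_k\} : \tau\in E\bigr\},
\]
recorded in the paragraph preceding the proposition.

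For the independence of $t$ from the choice of $\sigma$, I would note that any other embedding in $E_p$ has the form $\sigma'=\sigma h$ for some $h\in G$. Substituting into the definition $E_p\smallsetminus S_p=\{\sigma g_1,\ldots,\sigma g_k\}$ gives $J_{\sigma'}=h^{-1}J_\sigma$. A right coset $\{g,cg\}$ of $\{\id,c\}$ is contained in $h^{-1}J_\sigma$ precisely when $\{hg,(hch^{-1})(hg)\}$ is contained in $J_\sigma$, which is exactly the defining condition for a right coset of $\{\id,hch^{-1}\}$ to lie inside $J_\sigma$. Hence $t_c$ computed with respect to $J_{\sigma'}$ equals $t_{hch^{-1}}$ computed with respect to $J_\sigma$. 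By Lemma \ref{com}, conjugation by $h$ is a bijection of $C$ onto itself, so $\min_{c\in C}t_c$ is the same for the two choices.

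For $v=t$, the plan is to run $\tau$ through $E$ and relate the complex-conjugate-pair count of $\{\tau g_1,\ldots,\tau g_k\}$ (which computes $v_{\psi_\tau}$) to the coset count $t_c$ for the appropriate $c$. Fix a reference $\tau_0\in E$ and let $c_0\in G$ be the element of Lemma \ref{com} with $\overline{\tau_0}=\tau_0 c_0$. Any $\tau\in E$ has the form $\tau=\tau_0 g$; from $\overline{\tau}=\overline{\tau_0}\circ g=\tau_0 c_0 g$ and $\tau c_\tau=\tau_0 g c_\tau$ I get $c_\tau=g^{-1}c_0 g$, using injectivity of $\tau_0$. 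Thus as $\tau$ ranges over $E$ the element $c_\tau$ ranges over the full conjugacy class of $c_0$, which by Lemma \ref{com} is exactly $C$. Applying Lemma \ref{cc=c} to $\{\tau g_1,\ldots,\tau g_k\}$ with $c_\tau$ identifies $v_{\psi_\tau}$ with $t_{c_\tau}$, and minimizing over $\tau$ gives $v=\min_{c\in C}t_c=t$.

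The main obstacle is not any one computation but rather keeping the correspondence straight: verifying that $\tau\mapsto c_\tau$ is surjective onto $C$ (so that the minima really do match) and that the same conjugacy-class mechanism handles the shift $\sigma\mapsto \sigma h$. Both rest cleanly on Lemma \ref{com}; once that observation is made, the proposition follows without further transcendence input.
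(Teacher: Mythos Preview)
Your proof is correct and follows essentially the same route as the paper: both arguments reduce the equality $v=t$ to Lemma~\ref{cc=c} combined with the identification $\{E\smallsetminus S_\psi\}_\psi=\{\{\tau g_1,\ldots,\tau g_k\}:\tau\in E\}$ and the fact (from Lemma~\ref{com}) that $\tau\mapsto c_\tau$ sweeps out all of $C$. The only organizational difference is that you prove the $\sigma$-independence of $t$ directly via the conjugation $J_{\sigma h}=h^{-1}J_\sigma$, whereas the paper first establishes $v=t$ for the chosen $\sigma$ and then reads off independence from the fact that $v$ does not involve $\sigma$; both are valid, and yours is a bit more explicit while the paper's is slightly slicker.
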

   
   \begin{proof} Choose $\sigma \in E_p$ and write $E_p\smallsetminus S_p=\{\sigma g_1,\ldots,\sigma g_k\}$. Then $J_\sigma=\{g_1,\ldots,g_k\}$.
   
   For each isomorphism $\psi$ there exists $\tau_\psi\in E$ such that $E\smallsetminus S_\psi=\{\tau_\psi g_1,\ldots, \tau_\psi g_k\}$. Let $c_\psi\in C$ be such that $\overline{\tau_\psi}=\tau_\psi c_\psi$. Then Lemma \ref{cc=c} implies that $v_\psi=t_{c_\psi}$.
   
   On the other hand, for each $c\in C$ there exists $\tau\in E$ such that $\bar{\tau}=\tau c$, and for this $\tau$ there exists an isomorphism $\psi$ such that $E\smallsetminus S_\psi=\{\tau g_1,\ldots,\tau g_k\}$. Again Lemma \ref{cc=c} implies that $t_c=v_\psi$.
   
   Hence $\min_\psi\{v_\psi\}=\min_{c\in C}\{t_c\}$. So $v=t$. Since $v$ is independent of $\sigma$, so is $t$.
      \end{proof}

  \begin{cor}\label{tt} Let $M$ be a complex Galois extension of $\Q$. Let $t=\min_{c\in C}\{t_c\}$ (as defined above). Assume the $p$-adic Schanuel Conjecture.
  Then 
\[\rk\left(\log_p(\sigma_ig_j\e)_{\sigma_i\in S_p,\,g_j\in G}\right)
\geq r-(t-1)^+.\]
  \end{cor}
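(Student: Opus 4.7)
The approach I plan to take is to chain together two results already established in this section. The unlabeled corollary that follows Proposition \ref{L}, which packages Proposition \ref{L} together with Theorem \ref{rem}, already gives
\[
\rk\left(\log_p(\sigma_i g_j \e)\right)_{\sigma_i \in S_p,\,g_j \in G} \geq r - (v-1)^+,
\]
where $v = \min_\psi\{v_\psi\}$ counts pairs of complex conjugate embeddings in $E \smallsetminus S_\psi$, minimized over all isomorphisms $\psi : \C_p \rightarrow \C$. Meanwhile, Proposition \ref{horse} identifies this $v$ with the purely Galois-theoretic quantity $t = \min_{c \in C}\{t_c\}$, independently of the chosen $\sigma \in E_p$.

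The proof will then consist of nothing more than substituting $v = t$ into the prior inequality to conclude $\rk\mathfrak{R} \geq r - (t-1)^+$. I expect no real obstacle at this step, because the heavy lifting has already been done elsewhere: the $p$-adic Schanuel Conjecture is invoked inside Theorem \ref{rem} to build the transcendence-theoretic bridge from $p$-adic to archimedean logarithms; Proposition \ref{L} handles the archimedean rank via the linear-algebra relation $\log|\tau(x)| = \log|\overline{\tau}(x)|$; and Lemma \ref{cc=c} supplies the bijection between pairs of complex conjugate embeddings and right cosets of $\{\id, c\}$ that powers the identification $v = t$ inside Proposition \ref{horse}.

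The practical virtue of stating the bound in terms of $t$ rather than $v$ is that $t$ can be read off directly from the subset $E_p \smallsetminus S_p$ together with the conjugacy data of $G$, without having to range over the infinitely many isomorphisms between $\C_p$ and $\C$. In writing this up I would therefore present the corollary as the computable repackaging of the earlier bound, and invoke Proposition \ref{horse} at precisely the one place where the translation from $v$ to $t$ is needed.
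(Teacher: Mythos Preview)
Your proposal is correct and matches the paper's approach: the paper gives no explicit proof for this corollary precisely because it follows immediately from the preceding unlabeled corollary (the bound in terms of $v$) together with Proposition \ref{horse}'s identification $v=t$. Your commentary on why the $t$-formulation is preferable is accurate and in the spirit of the paper's surrounding discussion.
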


   \begin{cor}\label{ss}
 Let $M$ be a complex Galois extension of $\Q$. Let $\Gamma$, $\Delta_\Gamma$, and $Y$ be as defined in Section \ref{var}. 
Let $t=\min_{c\in C}\{t_c\}$. Assume the $p$-adic Schanuel Conjecture. Then
\[\rk_{\Z_p}(\overline{\Delta_\Gamma(Y)})\geq r-(t-1)^+.\]
\end{cor}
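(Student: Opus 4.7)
The plan is simply to chain together two previously established results, mirroring the structure used for the analogous corollary in the real case. By Proposition \ref{unit}, I first fix a weak Minkowski unit $\e \in \O_M^*$ satisfying $|\sigma_i g_j \e - 1|_p < 1$ for all $\sigma_i \in E_p$ and $g_j \in G$. This is the standing hypothesis that lets me invoke the machinery developed in Sections \ref{var} and \ref{calc}.

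With $\e$ fixed, Proposition \ref{equivV} applied to the subset $S_p \subset E_p$ constructed at the beginning of Section \ref{var} from the given $\Gamma$ yields the identity
\[\rk_{\Z_p}(\overline{\Delta_\Gamma(Y)}) = \rk(\log_p(\sigma_i g_j\e))_{\sigma_i\in S_p,\, g_j\in G}.\]
Thus the statement about the $\Z_p$-rank of a topological closure is converted into a statement about the rank of an explicit $p$-adic logarithm matrix, which is exactly the object Corollary \ref{tt} addresses.

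Now I apply Corollary \ref{tt} (which rests on the $p$-adic Schanuel Conjecture) to the right-hand side, obtaining the lower bound $r - (t-1)^+$ with $t = \min_{c \in C}\{t_c\}$. Combining the two displays gives the desired inequality. There is no substantive obstacle: the only compatibility issue worth noting is that the $t$ appearing in Corollary \ref{tt} is defined via any choice of $\sigma \in E_p$ used to write $E_p \smallsetminus S_p = \{\sigma g_1,\dots,\sigma g_k\}$, but Proposition \ref{horse} certifies that $t$ is independent of this choice, so it agrees unambiguously with the $t$ in the statement of the corollary.
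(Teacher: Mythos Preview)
Your proposal is correct and matches the paper's intended argument: the corollary is stated without proof immediately after Corollary \ref{tt}, and the implicit justification is precisely the combination of Proposition \ref{equivV} with Corollary \ref{tt} that you spell out. Your remark about the well-definedness of $t$ via Proposition \ref{horse} is a nice extra clarification that the paper leaves implicit.
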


   \subsection{The CM Case}

We continue to  let $\e$ be a weak Minkowski unit such that $|\sigma_i g_j \e-1|_p<1$ for all $g_j\in G$ and $\sigma_i\in E_p$.
 
 {If   we assume that $M$ is CM over $\Q$,} then Theorem \ref{rem} and Corollaries \ref{tt} and \ref{ss} can be strengthened in two ways: 
we can remove the ``maximum'' and ``minimum'' qualifications and we can replace the inequalities with equalities. 

   The keys to removing the ``maximum'' and ``minimum'' qualifications are the following  Lemma and the next Proposition. 

       \begin{lem}\label{uniq}
Let $M$ be CM over $\Q$. For all $\tau\in E$ there exists a {\em unique} $c\in G$ such that the following diagram commutes. 
\[\xymatrix{M\ar_c[d]\ar^{\overline{\tau}}[dr]\\
M\ar_{\tau}[r]&\C
}\]
\end{lem}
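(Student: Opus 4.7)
The plan is to combine the existence statement of Lemma \ref{com} with a straightforward injectivity argument, and to use the CM hypothesis to pin down the unique $c$ as the distinguished central involution. Existence of some $c \in G$ with $\overline{\tau} = \tau c$ is already provided by Lemma \ref{com}, so only uniqueness is new.

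For uniqueness at a fixed $\tau$, I would argue directly: if $\tau c_1 = \tau c_2 = \overline{\tau}$, then $\tau(c_1 m) = \tau(c_2 m)$ for every $m \in M$, and since $\tau \colon M \hookrightarrow \C$ is an injective field embedding, $c_1 m = c_2 m$ for all $m$, forcing $c_1 = c_2$. Note that this step uses only injectivity of $\tau$ and not the CM hypothesis. The genuinely CM-specific content is that the unique $c$ is the same for every $\tau$: letting $M^+$ denote the maximal totally real subfield of $M$, one has $[M:M^+] = 2$ and $\Gal(M/M^+) = \{\id, c_0\}$ for some $c_0 \in G$. For any $\tau \in E$, the image $\tau(M^+)$ lies in $\R$ because $M^+$ is totally real, so complex conjugation on $\C$ fixes $\tau(M^+)$ pointwise; hence $\overline{\tau}|_{M^+} = \tau|_{M^+}$. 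By injectivity of $\tau$, the $c$ with $\overline{\tau} = \tau c$ must fix $M^+$ pointwise, so $c \in \{\id, c_0\}$, and since $M$ is totally imaginary $\overline{\tau} \neq \tau$, which forces $c = c_0$.

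The main obstacle is interpretive rather than technical: uniqueness of $c$ for each fixed $\tau$ is automatic from injectivity of $\tau$ and would hold even in the non-CM complex case, so the real payoff of the CM hypothesis here is that the unique $c$ is independent of $\tau$. This collapses the conjugacy class of elements induced by complex conjugation from Lemma \ref{com} to the singleton $\{c_0\}$, which is precisely what is needed to drop the $\min$ and $\max$ qualifications in the subsequent strengthenings of Theorem \ref{rem} and Corollaries \ref{tt} and \ref{ss}.
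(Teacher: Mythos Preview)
The paper states this lemma without proof, so there is nothing to compare against directly. Your argument is correct: existence is Lemma~\ref{com}, uniqueness for a fixed $\tau$ follows immediately from injectivity of $\tau$, and you correctly observe that this last step does not use the CM hypothesis at all.

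Your further remark is the right one, and worth emphasizing. As literally stated, the lemma asserts only that each $\tau$ has a unique $c$, which---as you note---holds for any complex Galois extension. What the paper actually \emph{uses} (see the proof of Proposition~\ref{ind}: ``there is only one element in $C$'') is the stronger fact that this unique $c$ is the same for every $\tau$, so that $|C|=1$. Your identification of $c$ with the nontrivial element $c_0$ of $\Gal(M/M^+)$ supplies exactly this, and the reasoning (that $\overline{\tau}$ and $\tau$ agree on the totally real $M^+$, hence $c$ fixes $M^+$, hence $c\in\{\id,c_0\}$, and $c\neq\id$ since $M$ is totally imaginary) is clean and complete.
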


\begin{prop}\label{ind} Let $M$ be a CM extension of $\Q$. 
Then $\rk(\log|\tau_ig_j\e|)_{\tau_i\in S_\psi,g_j\in G}$ is independent of the isomorphism $\psi:\C_p\rightarrow \C$.
\end{prop}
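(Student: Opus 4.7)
The plan is to reduce the independence statement to the combinatorial fact that, in the CM case, the element of $G$ induced by complex conjugation is the same for every embedding $\tau \in E$. Once that is established, the machinery of Proposition \ref{L}, Lemma \ref{cc=c}, and Proposition \ref{horse} will immediately deliver $\psi$-independence.

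First I would invoke Proposition \ref{L} to rewrite $\rk\mathfrak{L}_\psi = r - (v_\psi - 1)^+$, where $v_\psi$ is the number of pairs of complex conjugate embeddings in $E \smallsetminus S_\psi$. Then I would fix $\sigma \in E_p$, write $E_p \smallsetminus S_p = \{\sigma g_1, \ldots, \sigma g_k\}$ and set $J_\sigma = \{g_1, \ldots, g_k\}$. For each $\psi$, there is a $\tau_\psi \in E$ with $E \smallsetminus S_\psi = \{\tau_\psi g_1, \ldots, \tau_\psi g_k\}$, and Lemma \ref{cc=c} computes $v_\psi$ as the number of right cosets of $\{\id, c_{\tau_\psi}\}$ in $J_\sigma$, where $c_{\tau_\psi}$ is the element with $\overline{\tau_\psi} = \tau_\psi c_{\tau_\psi}$. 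Thus the whole statement reduces to showing that $c_{\tau_\psi}$ is the same element of $G$ for every $\psi$.

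The heart of the argument, and the main obstacle, is proving that in the CM case the map $\tau \mapsto c_\tau$ from Lemma \ref{uniq} is constant. Here I would use the structure of CM fields: let $M^+$ denote the maximal totally real subfield of $M$, so that $[M : M^+] = 2$. For any $\tau \in E$, the restriction $\tau|_{M^+}$ is a real embedding, so $\overline{\tau(x)} = \tau(x)$ for all $x \in M^+$; equivalently $c_\tau$ fixes $M^+$, so $c_\tau \in \Gal(M/M^+)$. Because $M$ is totally imaginary, no $\tau$ is real, so $c_\tau \neq \id$, and since $\Gal(M/M^+)$ has order $2$, $c_\tau$ must be its unique non-identity element. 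This element is intrinsic to $M$, independent of $\tau$; call it $c$.

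Combining these, $v_\psi$ equals the number of right cosets of $\{\id, c\}$ in $J_\sigma$ for every $\psi$, a quantity that depends only on $S_p$ (through $J_\sigma$) and on the intrinsic complex conjugation $c$. Therefore $\rk\mathfrak{L}_\psi = r - (v_\psi - 1)^+$ takes the same value for every isomorphism $\psi:\C_p \to \C$, which is the desired independence. I expect the only genuinely delicate step to be the identification $c_\tau = c$ above; the rest is bookkeeping from the results already established in Section \ref{calc}.
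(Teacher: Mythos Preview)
Your proposal is correct and follows essentially the same route as the paper: invoke Proposition~\ref{L} to write $\rk\mathfrak{L}_\psi = r-(v_\psi-1)^+$, translate $v_\psi$ into a coset count via Lemma~\ref{cc=c} (as in the proof of Proposition~\ref{horse}), and then use that in the CM case the set $C$ of complex conjugations is a singleton. The only difference is that the paper simply asserts $|C|=1$ for CM fields (implicitly relying on Lemma~\ref{uniq} and the standard fact), whereas you supply the explicit argument via $\Gal(M/M^+)$; this is a welcome elaboration rather than a different method.
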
  

 \begin{proof} Write $E_p\smallsetminus S_p=\{\sigma g_1,\ldots,\sigma g_k\}$ and $J_\sigma:=\{g_1,\ldots,g_k\}$. In the proof of Proposition \ref{horse}, we proved that for all $v_\psi$ there exists $c\in C$ such that $v_\psi=t_c$. But since $M$ is CM over $\Q$, there is only one element in $C$, denote it $c$. So for all isomorphisms $\psi$ 
 \[\rk(\log|\tau_ig_j\e|)_{\tau_i\in S_\psi,g_j\in G}=r-(v_\psi-1)^+=r-(t-1)^+,\]
 where $t=t_c$ is the number of right cosets of $\{\id,c\}$ in $J_\sigma$.
\end{proof}

The key to replacing the inequalities with equalities is the following lemma.
 
 \begin{lem}\label{rtun} Let $M$ be a CM extension of $\Q$ with $m_1,m_2\in \mathcal{O}_M^*$ and let $\tau_i,\tau_j\in E$.
If 
\[|  \tau_im_1   | = |\tau_jm_2    |\]
where $|\cdot|$ is the usual complex modulus, then 
\[\zeta( \tau_im_1  )=   \tau_jm_2  \]
where $\zeta$ is a root of unity.
 \end{lem}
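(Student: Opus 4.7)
The plan is to consider $\alpha := \tau_j(m_2)/\tau_i(m_1) \in \C$ and prove it is a root of unity by applying Kronecker's theorem: a nonzero algebraic integer all of whose Galois conjugates have absolute value $1$ must be a root of unity. Algebraic integrality is immediate, since if $L$ denotes the Galois closure in $\C$ of the compositum $\tau_i(M) \cdot \tau_j(M)$, then $\tau_i(m_1), \tau_j(m_2) \in \mathcal{O}_L^*$ and therefore $\alpha \in \mathcal{O}_L^*$.

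The heart of the argument exploits the CM hypothesis through Lemma \ref{uniq}: a single $c \in G$, independent of the embedding, satisfies $\sigma \circ c = \overline{\sigma}$ for every $\sigma \in E$. Set $\beta_k := m_k \cdot c(m_k)$. Each $\beta_k$ is fixed by $c$ and hence lies in the maximal totally real subfield $M^+$, and moreover $\sigma(\beta_k) = \sigma(m_k)\overline{\sigma(m_k)} = |\sigma(m_k)|^2 > 0$ for every embedding $\sigma$, so $\beta_k$ is a totally positive unit in $\mathcal{O}_{M^+}^*$. Consequently the hypothesis $|\tau_i(m_1)| = |\tau_j(m_2)|$ squares to the algebraic identity
\[
\tau_i(\beta_1) = \tau_j(\beta_2)
\]
inside $L$.

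Next I would pass to Galois conjugates. For each $g \in \Gal(L/\Q)$, applying $g$ to the displayed identity yields $(g\tau_i)(\beta_1) = (g\tau_j)(\beta_2)$, and the same identity $\sigma(\beta_k) = |\sigma(m_k)|^2$, applied now to the embeddings $\sigma = g\tau_i$ and $\sigma = g\tau_j$ of $M$ into $\C$, rewrites this as $|(g\tau_i)(m_1)|^2 = |(g\tau_j)(m_2)|^2$, i.e.\ $|g(\alpha)| = 1$. Since this holds for every $g \in \Gal(L/\Q)$, Kronecker's theorem gives $\alpha = \zeta$ for some root of unity $\zeta$, whence $\tau_j(m_2) = \zeta \cdot \tau_i(m_1)$, which is the claim.

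The main obstacle is conceptual rather than technical: the CM hypothesis is precisely what allows a \emph{single} element $c \in G$ to realize complex conjugation under every embedding simultaneously. Without this uniformity, as in a general complex Galois extension, the modulus-squared equation would not globalize to a single algebraic identity in $M$ preserved by $\Gal(L/\Q)$, and Kronecker's criterion could not be brought to bear on $\alpha$.
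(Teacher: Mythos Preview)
Your argument is correct, and it takes a genuinely different route from the paper's. The paper sets $\zeta=\tau_j(m_2)/\tau_i(m_1)$, checks only that $|\zeta|=1$ at the \emph{single} given archimedean place, and then invokes the CM structure via Dirichlet's Unit Theorem: since $\mathcal{O}_K^*/\mathcal{O}_{K^+}^*$ is finite (where $K=\tau_i(M)=\tau_j(M)$), some power $\zeta^q$ lands in $\mathcal{O}_{K^+}^*\subset\R$, and a real number of modulus $1$ is $\pm1$. You instead globalize the hypothesis to the algebraic identity $\tau_i(\beta_1)=\tau_j(\beta_2)$ in $M^+$, transport it by $\Gal(L/\Q)$, and thereby verify $|g(\alpha)|=1$ at \emph{every} archimedean place, so that Kronecker's theorem applies directly. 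Your approach is arguably cleaner and makes more transparent exactly where the CM uniformity (Lemma~\ref{uniq}) enters: it is what lets the squared-modulus equation become an honest algebraic equation stable under Galois. The paper's approach, by contrast, trades the Galois-transport step for the standard unit-index fact about CM fields. One small simplification: since $M$ is Galois over $\Q$, you already have $\tau_i(M)=\tau_j(M)$, so your $L$ is just this common image and no Galois closure is needed.
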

 
 \begin{proof}
  
Since $M$ is a Galois extension, $\tau_i(M)=\tau_j(M)$. Denote this subfield of $\C$ as $K$. Since $M$ is isomorphic to $K$, 
 $K$ is a CM extension of $\Q$ having the same degree over $\Q$ as $M$ does. Let $n$ be  the degree of $K$ over $\Q$. Let $K^+$ be the totally real subfield of $K$. Hence $K$ is a degree 2 extension of $K^+$ and $K^+$ is a degree $\frac{1}{2}n$  extension of $\Q$.

 Dirichlet's Unit Theorem implies $\rk_\Z\mathcal{O}_K^*=\frac{1}{2}n-1$ because $K$ is a complex extension of $\Q$ and $\rk_\Z \mathcal{O}_{K^+}^{*}=\frac{1}{2}n-1$ because $K^+$ is a totally real extension of $\Q$. Since $\mathcal{O}_{K}^{*}$ and $\mathcal{O}_{K^+}^{*}$ have the same finite rank and $\mathcal{O}_{K^+}^{*}$ is a subgroup of $\mathcal{O}_{K}^{*}$, $\mathcal{O}_{K}^{*}\big/\mathcal{O}_{K^+}^{*}$ is a finite group.

The following series of calculations on elements of $K$ shows that given $|  \tau_im_1   | = |\tau_jm_2    |$ there exists $\zeta$ with $|\zeta|=1$ such that $\zeta( \tau_im_1  )=   \tau_jm_2$. ($c$ is as defined in Lemma \ref{uniq}.)
 \begin{align*}
 |  \tau_im_1   | &= |\tau_jm_2    |\\
 (\tau_icm_1)( \tau_im_1 )   &=(\tau_jcm_2)( \tau_jm_2)\\
   (\tau_jcm_2)^{-1}(\tau_icm_1)( \tau_im_1 )   &=( \tau_jm_2)
 \end{align*}
 Let $\zeta = (\tau_jcm_2)^{-1}(\tau_icm_1)$.
  \begin{align*}   
 \zeta( \tau_im_1  )&=   \tau_jm_2\\
| \zeta|\,| \tau_im_1  |&=  | \tau_jm_2 |\\
|\zeta|&=1
 \end{align*}

 Since $m_1,m_2\in \mathcal{O}_M^*$, $\tau_im_1,   \tau_jm_2 \in \mathcal{O}_{K}^{*}$. Thus $\zeta\in\mathcal{O}_{K}^{*}$. Since $\mathcal{O}_{K}^{*}\big/\mathcal{O}_{K^+}^{*}$ is a finite group, there exists $q\in\Z$ such that $\zeta^q\in \mathcal{O}_{K^+}^{*}\subset\R$. Thus since $|\zeta|=1$, $|\zeta^q|=1$. Combining this with $\zeta^q\in \R$ we have $\zeta^q=\pm1$.
  \end{proof}

\begin{thm}\label{cm} Let $M$ be CM over $\Q$. Let $\e\in \O_M^*$ be a weak Minkowski unit such that $|\sigma_i g_j \e-1|_p<1$ for all $i,j$. Let $S_p$ be a subset of $E_p$ and $S_\psi$ be a subset of $E$ as defined previously. Assume both Schanuel's Conjecture and the $p$-adic version. Then for any isomorphism $\psi:\C_p\rightarrow\C$ 
\[\rk(\log_p(\sigma_ig_j\e))_{\sigma_i\in S_p,\,g_j\in G}=\rk(\log|\tau_ig_j\e|)_{\tau_i\in S_\psi,\,g_j\in G}. \]
\end{thm}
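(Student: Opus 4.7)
The plan is to obtain the theorem by combining both Schanuel Conjectures: the $\geq$ direction is essentially already proven, and for the $\leq$ direction I will run the argument of Theorem \ref{rem} in reverse, with the CM hypothesis supplying exactly the rigidity needed. Indeed, Theorem \ref{rem} gives
\[\rk(\log_p(\sigma_ig_j\e))_{\sigma_i\in S_p,\,g_j\in G}\geq\max_\psi\{\rk(\log|\tau_ig_j\e|)_{\tau_i\in S_\psi,\,g_j\in G}\},\]
and Proposition \ref{ind} shows the right-hand side is independent of $\psi$, so $\geq$ holds for every single $\psi$. The real work lies in the reverse inequality.

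The key CM input is a pair of ``factor of $2$'' identities. Let $c\in G$ be the unique complex conjugation furnished by Lemma \ref{uniq}; in the CM case $c$ is central in $G$. Setting $\eta:=\e\cdot c\e\in\O_{M^+}^*$, each $\tau_i g_j\eta=|\tau_i g_j\e|^2$ is a positive real algebraic number, so that $\log\tau_i g_j\eta=2\log|\tau_i g_j\e|$. On the arithmetic side, $\nu:=\e/c\e$ satisfies $c\nu=\nu^{-1}$; since $\rk_\Z\O_M^*=\rk_\Z\O_{M^+}^*$ for CM fields, $\nu$ must be a root of unity, so $\log_p\sigma g\e=\log_p\sigma cg\e$ for all $\sigma\in E_p$ and $g\in G$, and therefore $\log_p\sigma g\eta=2\log_p\sigma g\e$. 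These two identities couple the two matrices.

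With this setup the $\leq$ direction mirrors the proof of Theorem \ref{rem}. Assume for contradiction $\mathfrak{l}:=\rk(\log|\tau_i g_j\e|)<\mathfrak{r}:=\rk(\log_p(\sigma_i g_j\e))$ and form $\mathfrak{L}'':=(\log\tau_i g_j\eta)_{\tau_i\in S_\psi,\,g_j\in G}$, whose rank is also $\mathfrak{l}$, so that every $\mathfrak{r}\times\mathfrak{r}$ minor determinant $P_k$ vanishes on its entries. Since these entries are logarithms of nonzero algebraic numbers, I apply Schanuel's Conjecture to a $\Q$-basis $\{\log\tau_{i(s)}g_{j(s)}\eta\}_{s=1}^m$ of their $\Q$-span to obtain algebraic independence, and then Lemma \ref{poly} forces each $P_k$ into the ideal generated by $\{T_l-A_l\}$, where $A_l\in\Q[T_1,\ldots,T_m]$ records the $\Q$-linear dependence of the $l$-th log on the basis. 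Exponentiating yields multiplicative identities $(\tau_{i(l)}g_{j(l)}\eta)^{a_l}=\prod_s(\tau_{i(s)}g_{j(s)}\eta)^{a_{ls}}$; applying $\psi^{-1}$ transfers them to the same identities in $\C_p$ with $\sigma_i$ in place of $\tau_i$, and $\log_p$ then produces the corresponding $\Z$-linear relations among the $\log_p\sigma_i g_j\eta$. The identity $\log_p\sigma g\eta=2\log_p\sigma g\e$ shows the entries of $\mathfrak{R}$ satisfy exactly the same $\Z$-linear relations, so $P_k(\log_p\sigma_i g_j\e)=0$ for every $k$, contradicting $\rk\mathfrak{R}=\mathfrak{r}$.

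The main obstacle is that the CM hypothesis must be used essentially: it is the uniqueness and centrality of $c$ that make $\eta$ lie in $M^+$ and $\nu$ torsion, which in turn produces the clean correspondence between the complex and $p$-adic matrices above. In a general complex Galois extension neither holds, and that is precisely why Theorem \ref{rem} only delivers the one-sided inequality there; verifying that these two CM-specific identities do all the bridging work, independently of any choice of $\psi$, is the heart of the argument.
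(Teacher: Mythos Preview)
Your proof is correct and follows the same overall contrapositive strategy as the paper's---mirror the argument of Theorem~\ref{rem} with the complex and $p$-adic sides swapped, using the CM hypothesis to recover multiplicative identities from identities among absolute values---but you deploy the CM input differently. The paper works directly with $\e$: after Schanuel and exponentiation it obtains an identity among the $|\tau_i g_j\e|$, then invokes Lemma~\ref{rtun} (which encodes the finiteness of $\O_M^*/\O_{M^+}^*$) to lift this to an identity among the $\tau_i g_j\e$ themselves up to a root of unity, which then vanishes under $\log_p$. You instead preprocess by passing to $\eta=\e\cdot c\e\in\O_{M^+}^*$, so that $\tau_i g_j\eta=|\tau_i g_j\e|^2$ is already a positive real algebraic number and no separate lifting lemma is needed; the same CM fact reappears as the observation that $\nu=\e/c\e$ is a root of unity, giving $\log_p\sigma g\eta=2\log_p\sigma g\e$. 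Your packaging is arguably a bit cleaner in that the root-of-unity bookkeeping is done once at the outset rather than inside the main argument, and it makes transparent that both matrices are simply twice the corresponding matrices for the totally real unit $\eta$; the paper's version keeps $\e$ throughout and isolates the CM step in a self-contained lemma. Both routes are equally valid and rest on the same underlying CM fact.
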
   

Before we begin the proof, note the following important corollaries.

\begin{cor}\label{CMt} Keep the assumptions of the previous theorem. For any choice of $\sigma$, define $J_\sigma$  as before. Let $c$ be the unique element of $G$ induced by complex conjugation and let $t$ be the number of right cosets of $\{\id, c\}$ in $J_\sigma$. Then
\[\rk(\log_p(\sigma_ig_j\e))_{\sigma_i\in S_p,g_j\in G}=r-(t-1)^+.\]
\end{cor}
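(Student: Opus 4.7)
The plan is to chain together Theorem \ref{cm} with Proposition \ref{L} and Lemma \ref{cc=c}, using the CM hypothesis (via Lemma \ref{uniq}) to collapse the various quantities $v_\psi$ and $t_c$ into the single number $t$.

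First I would fix an arbitrary isomorphism $\psi : \C_p \to \C$. By Theorem \ref{cm}, under both Schanuel's Conjecture and its $p$-adic version,
\[\rk(\log_p(\sigma_i g_j \e))_{\sigma_i \in S_p,\, g_j \in G} = \rk(\log|\tau_i g_j \e|)_{\tau_i \in S_\psi,\, g_j \in G}.\]
Next, applying Proposition \ref{L} to the right-hand side, this equals $r - (v_\psi - 1)^+$, where $v_\psi$ denotes the number of pairs of complex conjugate embeddings in $E \smallsetminus S_\psi$.

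Now I would translate $v_\psi$ into coset-theoretic data on $J_\sigma$. Since $E_p \smallsetminus S_p = \{\sigma g_1, \ldots, \sigma g_k\}$, for each $\psi$ there exists $\tau_\psi \in E$ with $E \smallsetminus S_\psi = \{\tau_\psi g_1, \ldots, \tau_\psi g_k\}$, obtained via the bijection of Lemma \ref{bij}. Because $M$ is CM, Lemma \ref{uniq} provides a \emph{unique} element $c \in G$ with $\overline{\tau_\psi} = \tau_\psi c$, and this $c$ is the same for every $\tau_\psi$ (equivalently, every $\psi$). By Lemma \ref{cc=c}, $v_\psi$ equals the number of right cosets of $\{\id, c\}$ inside $\{g_1, \ldots, g_k\} = J_\sigma$; that is, $v_\psi = t_c = t$ for every $\psi$. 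Therefore
\[\rk(\log_p(\sigma_i g_j \e))_{\sigma_i \in S_p,\, g_j \in G} = r - (t-1)^+.\]

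Finally, independence of $t$ from the choice of $\sigma$ is already recorded in Proposition \ref{horse}, so the formula is unambiguous. There is no real obstacle here: the work was done in Theorem \ref{cm} (which supplies equality rather than just an inequality) and in Lemma \ref{uniq} (which kills the $\min$ over $c \in C$ by making $C$ a singleton in the CM case). The corollary is essentially a bookkeeping consequence of assembling these ingredients.
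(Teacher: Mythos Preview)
Your proof is correct and follows the same route the paper intends: combine Theorem~\ref{cm} with the computation of $\rk\mathfrak{L}_\psi$ in the CM case. The only cosmetic difference is that the paper packages your steps involving Proposition~\ref{L}, Lemma~\ref{cc=c}, and Lemma~\ref{uniq} into Proposition~\ref{ind} (whose proof already shows $\rk\mathfrak{L}_\psi = r-(t-1)^+$ with $t=t_c$ for the unique $c$), so one could cite that directly rather than re-deriving it.
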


   \begin{cor}
Let $\Gamma$, $\Delta_\Gamma$, and $Y$ be as defined in Section \ref{var}. Keep the  assumptions of the previous Theorem and Corollary. Then
\[\rk_{\Z_p}(\overline{\Delta_\Gamma(Y)})= r-(t-1)^+.\]
\end{cor}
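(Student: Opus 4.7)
The plan is to recognize that this corollary is essentially an immediate translation of Corollary~\ref{CMt} via Proposition~\ref{equivV}, so no new ideas are needed beyond assembling results already in hand. The chain of equalities is short: Proposition~\ref{equivV} identifies the $\Z_p$-rank of $\overline{\Delta_\Gamma(Y)}$ with the rank of the matrix $\left(\log_p(\sigma_i g_j\e)\right)_{\sigma_i\in S_p,\,g_j\in G}$, and Corollary~\ref{CMt} evaluates that rank as $r-(t-1)^+$ in the CM case.

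I would first verify that the hypotheses needed to invoke Proposition~\ref{equivV} are available. The weak Minkowski unit $\e\in\O_M^*$ with $|\sigma_i g_j\e-1|_p<1$ for all $i,j$ is guaranteed by Proposition~\ref{unit} and is assumed throughout Section~\ref{calc}; the subset $S_p\subset E_p$ is the one associated to $\Gamma$ by the construction in Section~\ref{var}. So Proposition~\ref{equivV} applies verbatim and gives
\[
\rk_{\Z_p}\bigl(\overline{\Delta_\Gamma(Y)}\bigr)
=\rk\bigl(\log_p(\sigma_ig_j\e)\bigr)_{\sigma_i\in S_p,\,g_j\in G}.
\]

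Next I would invoke Corollary~\ref{CMt}, which applies because $M$ is CM, the $p$-adic Schanuel Conjecture is assumed (as part of ``the previous Theorem and Corollary''), and $t$ is defined exactly as in that corollary (the number of right cosets of $\{\id,c\}$ in $J_\sigma$, where $c$ is the unique complex conjugation in $G$ given by Lemma~\ref{uniq}, so $t$ is well-defined without a ``minimum''). This yields
\[
\rk\bigl(\log_p(\sigma_ig_j\e)\bigr)_{\sigma_i\in S_p,\,g_j\in G}=r-(t-1)^+.
\]
Combining the two displays gives $\rk_{\Z_p}\bigl(\overline{\Delta_\Gamma(Y)}\bigr)=r-(t-1)^+$, as desired.

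There is no serious obstacle, since the nontrivial content (Schanuel input, CM-specific uniqueness, and the topological/matrix equivalence) has been absorbed into the results being cited. The only point deserving a sentence of care is to record that $t$ in this corollary means exactly the $t$ of Corollary~\ref{CMt}, so there is no ambiguity between the ``real'' $t=|E_p\smallsetminus S_p|$ of Proposition~\ref{real} and the ``complex'' $t=\min_{c\in C}\{t_c\}$ of Corollary~\ref{tt}; in the CM setting $C=\{c\}$ is a singleton, so these collapse to a single number.
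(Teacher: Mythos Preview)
Your proposal is correct and matches the paper's approach: the paper states this corollary without proof, treating it as immediate from Proposition~\ref{equivV} and Corollary~\ref{CMt}, which is exactly the chain you assemble.
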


\begin{proof}[Proof of Theorem \ref{cm}] Let $\mathfrak{R}=(\log_p(\sigma_ig_j\e))_{\sigma_i\in S_p,\,g_j\in G}$ and $\mathfrak{L}_\psi=$\linebreak $(\log|\tau_ig_j\e|)_{\tau_i\in S_\psi,\,g_j\in G}.$
 Define  $\mathfrak{r}:=\rk\mathfrak{R}$. By Proposition \ref{ind} the rank of  $\mathfrak{L}_\psi$ is the same for all  $\psi$, we will denote this rank by $\mathfrak{l}$.  By Theorem \ref{rem} we know that   $\mathfrak{r}\geq\mathfrak{l}$. 
 For contradiction, assume that $\mathfrak{r}>\mathfrak{l}$. 

 This proof is very similar to the proof of Theorem \ref{rem}. 
 
Express the determinants of all $\mathfrak{r}\times \mathfrak{r}$ minors of a $|G|\times |S_p|$ matrix as polynomials in $|G|\,|S_p|=:v$ indeterminates, $P_k(T)\in\Q[T]$ (where $T=T_1, \ldots, T_{v}$). Since $\mathfrak{r}>\mathfrak{l}$,  when we evaluate the polynomials for $\mathfrak{L}_\psi$ we have  $P_k(\log|\tau_ig_j\e|)=0$.

Let $F$ be the $\Q$-subvector space of $\R$ generated by the  entries of $\mathfrak{L}_\psi$ and define $m=\dim_\Q(F)$. Number the $v$ entries of the matrix by mapping $l\in\{1,\ldots,v\}$ to the entry $\log|\tau_{i(l)}g_{j(l)}\e|$ so that the first $m$ entries $\left\{\log|\tau_{i(s)}g_{j(s)}\e|\right\}_{1\leq s\leq m}$ are a basis of $F$.  Since $M$ is a finite extension of $\Q$, $\e\in M$ is algebraic over $\Q$, hence so are $\tau_{i(s)}g_{j(s)}\e$  and $|\tau_{i(s)}g_{j(s)}\e|$. Thus the $\{|\tau_{i(s)}g_{j(s)}\e|\}_{1\leq s\leq m}$ satisfy the hypotheses of Schanuel's Conjecture (NOT the $p$-adic conjecture). So the $\{\log|\tau_{i(s)}g_{j(s)}\e|\}_{1\leq s\leq m}$ are algebraically independent over $\Q$.

Let $a_{l},a_{ls}\in\Z$ with $a_{l}>0$ be such that  for all $l$,  $1\leq l\leq v$,
\begin{equation}\label{sumlog3}
a_{l}\log|\tau_{i(l)}g_{j(l)}\e|=\sum_{s=1}^m a_{ls}\log|\tau_{i(s)}g_{j(s)}\e|.
\end{equation}
Number the indeterminates so that $T_1,\ldots,T_m$ correspond to the basis elements of $F$. 
Using Equation (\ref{sumlog3}), define $A_l\in\Q[T_1,\ldots,T_m]$ as 
\[A_l=\sum_{s=1}^m \frac{a_{ls}}{a_l}T_s.\]
Now consider $P_k'(T_1,\ldots,T_m)=P_k(T_1,\ldots,T_m,A_{m+1},\ldots,A_{v})\in\Q[T_1,\ldots,T_m]$. We still have $P_k'\left(\log|\tau_{i(s)}g_{j(s)}\e|\right)=0.$ Since the $\{\log|\tau_{i(s)}g_{j(s)}\e|\}_{1\leq s\leq m}$ are algebraically independent over $\Q$,  $P_k'$ must be identically $0$ for all $k$. Thus Lemma \ref{poly} implies that, for all $k$,  $P_k$ is in the ideal generated by \linebreak $\{T_l-A_l\}_{1\leq l\leq v}$, i.e., in the ideal generated by $\{T_l-\sum_{s=1}^m \frac{a_{ls}}{a_l}T_s\}_{1\leq l\leq v}$.

Equation (\ref{sumlog3}) is true in $\R$, so we apply the real exponential function 
\begin{equation*}\label{93}|\tau_{i(l)}g_{j(l)}\e|^{a_l }=\prod_{s=1}^m |\tau_{i(s)}g_{j(s)}\e|^{a_{ls}}.\end{equation*} 
Then Lemma \ref{rtun} implies that  
\[\zeta(\tau_{i(l)}g_{j(l)}\e)^{a_l }=\prod_{s=1}^m (\tau_{i(s)}g_{j(s)}\e)^{a_{ls}},\]
where $\zeta$ is a root of unity in $\C$.
For an isomorphism $\phi:\C\rightarrow\C_p$, Lemma \ref{bij} gives us a bijection between the elements of $E_p$ and $E$: 
\[\xymatrix{\sigma_i:M\ar^(.6){\tau_i}[r]&\C_p\ar^{\phi}_{\cong}[r]&\C}. \]
 So  
\begin{align*}
\phi(\zeta)\phi(\tau_{i(l)}g_{j(l)}\e)^{a_l }&=\prod_{s=1}^m \phi(\tau_{i(s)}g_{j(s)}\e)^{a_{ls}}\\
\phi(\zeta)(\sigma_{i(l)}g_{j(l)}\e)^{a_l }&=\prod_{s=1}^m (\sigma_{i(s)}g_{j(s)}\e)^{a_{ls}}.
\end{align*}
Next we apply the $p$-adic logarithm  to both sides, noting that $\phi(\zeta)$ is a root of unity in $\C_p$ and that the $p$-adic logarithm of a root of unity equals 0:
\begin{align}
\log_p(\phi(\zeta))+a_l\log_p\left(\sigma_{i(l)}g_{j(l)}\e\right)&=\notag\\a_l\log_p\left(\sigma_{i(l)}g_{j(l)}\e\right)&=\sum_{s=1}^m a_{ls}\log_p\left(\sigma_{i(s)}g_{j(s)}\e\right).\label{cx3}
\end{align}
   
   Now Equation (\ref{cx3}) combined with the fact that all $P_k$ are in the ideal  generated by $\{T_l-\sum_{s=1}^m \frac{a_{ls}}{a_l}T_s\}_{1\leq l\leq v}$ implies that for all $k$, $$P_k(\log_p\left(\sigma_{i(1)}g_{j(1)}\e\right),\ldots,\log_p\left(\sigma_{i(v)}g_{j(v)}\e\right))=0.$$ But $\rk \mathfrak{R}=\mathfrak{r}$. Thus the determinant of at least one $\mathfrak{r}\times \mathfrak{r}$ minor is non-zero. Hence we have a contradiction. 
\end{proof}

\end{document}